\theoremstyle{plain}
\newtheorem{thm}{Theorem} [section]
\newtheorem{prop}[thm]{Proposition}
\newtheorem{cor}[thm]{Corollary}
\newtheorem{lemma}[thm]{Lemma}
\theoremstyle{remark}
\newtheorem{rem}[thm]{Remark}
\theoremstyle{definition}
\newtheorem{defi}[thm]{Definition}
\newtheorem{example}[thm]{Example}
 \newcommand\Cpx{{\mathbb C}}
 \newcommand\Dbb{{\mathbb D}}
 \newcommand\Dc{{\mathcal{D}}}
 \newcommand\Mcal{{\mathcal{M}}}
 \newcommand\Nc{{\mathcal{N}}}
 \newcommand\restrict{{\upharpoonright}}
\newcommand\DT{\operatorname{DT}}
\newcommand\eps{\epsilon}
\newcommand\Nats{{\mathbf N}}
\newcommand\UT{{\mathcal{U}\mathcal{T}}}
\def\today{{\number\day\space
 \ifcase\month\or
  January\or February\or March\or April\or May\or June\or
  July\or August\or September\or October\or November\or December\fi
 \space\number\year}}
\newcommand\HEu{{\EuScript H}}                   
\newcommand\Xt{{\widetilde X}}
\newcommand\Ncal{{\mathcal{N}}}
\newcommand\norm[1]{\ensuremath{\left\vert\left\vert #1 \right\vert\right\vert}}
\begin{document}

\title[]{Some non-spectral  DT-operators in finite von Neumann algebras}

\author[Dykema]{Ken Dykema$^*$}
 \address{Ken Dykema, Department of Mathematics, Texas A\&M University, College Station, TX, USA.}
 \email{ken.dykema@math.tamu.edu}
 \thanks{\footnotesize ${}^{*}$ Research supported in part by NSF grant DMS--1800335.}

\author[Krishnaswamy-Usha]{Amudhan Krishnaswamy-Usha$^{*,\dagger}$}
\address{Amudhan Krishnaswamy-Usha, Delft Institute of Applied Mathematics, Delft University of Technology,
Delft, The Netherlands }
\email{A.KrishnaswamyUsha@tudelft.nl}
\thanks{\footnotesize ${}^{\dagger}$ Portions of this work are included in the thesis of A.Krishnaswamy-Usha for partial fulfillment of the requirements to obtain a Ph.D. degree at Texas A\&M University.}

\subjclass[2010]{47C15, 47A11, 47B40}

\keywords{finite von Neumann algebra, Haagerup-Schultz projection, spectrality, decomposability, DT-operator}

\begin{abstract} 
Given a DT-operator $Z$ whose Brown measure is radially symmetric and has a certain concentration property, it is shown that $Z$ is not spectral in the sense of Dunford.
This is accomplished by showing that the angles between certain complementary Haagerup-Schultz projections of $Z$ are zero.
New estimates on norms and traces of powers of algebra-valued circular operators over commutative C$^*$-algebras are also proved. 
\end{abstract}

\date{\today}

\maketitle

\section{Introduction}

In~\cite{DKU19}, we investigated, for elements of finite von Neumann algebras, the relationship between
\textit{spectrality} of an operator, in the sense of Dunford~\cite{D54}, and the angles between certain invariant subspaces of the operator.
We thereby showed that the circular free Poisson operators, introduced in~\cite{DH01}, cannot be spectral.
This class includes Voiculescu's circular operator.

The aforementioned invariant subspaces of the operator are the {\em Haagerup--Schultz} subspaces~\cite{HS09}.
The circular free Poisson operators belong to a class of operators called the DT-operators, introduced in~\cite{DH04a}.
In this paper, we will investigate angles between Haagerup--Schultz subspaces of a different set of DT-operators, whose Brown measures satisfy certain properties, and thereby show that also these operators cannot be spectral.
More specifically, those whose Brown measures are radially symmetric but which have ``blow-ups'' of a particular sort (see Theorem~\ref{thm:unza_measures})
are not spectral.
Our proof uses new estimates on norms, conditional expectations and traces of powers of algebra-valued circular operators.

This paper is organized as follows: Section~\ref{sec:prelim} contains a brief review of some elements of spectral operators and Haagerup-Schultz projections.
Section~\ref{sec:angles} is a summary of our previous work on the relationship between spectrality and the angles between Haagerup-Schultz projections.
Section~\ref{sec:DTops} contains an introduction to DT-operators and proof of an upper triangular model for DT-operators.
In Section~\ref{sec:Bcirc} we consider $B$-valued circular operators when $B$ is a commutative C$^*$-algebra and prove some norm estimates involving them.
In Section~\ref{sec:nonspec}, we estimate the angles between certain Haagerup-Schultz subspaces for suitable DT-operators, and use this to show
that they cannot be spectral. 
 
\section{Preliminaries}
\label{sec:prelim}

\subsection{Spectral operators}

Spectral operators were introduced by Dunford \cite{D54} as a generalization of normal operators. Normal operators have a associated projection-valued spectral measure which behaves well with respect to the spectrum. Spectral operators are those which have a well-behaved idempotent-valued spectral measure. To be precise,
 
\begin{defi}
An operator $T$ on a Hilbert space $\HEu$ is said to be {\em spectral} if there exists a map $E$ which sends Borel subsets of
$\Cpx$ to operators in $\HEu$, which satisfies the following criteria:
 
 \begin{enumerate}[(i)]
     \item $ E(\Cpx) = 1 $
     \item $ E(B_1 \cap B_2) = E(B_1) E(B_2)$, for all Borel sets $B_1,B_2$.
     \item $ E( \cup_{i=1}^\infty B_i ) = \sum_{i=1}^\infty E(B_i)$ whenever $B_i$ are pairwise disjoint Borel sets in $\Cpx$, and the sum on the right converges in the strong operator topology.
     \item $\sup\{\norm{E(B)} : B \text{ Borel in } \Cpx\} < \infty$.
     \item $E(B)T=TE(B)$ for all Borel sets $B$.
     \item For all Borel subsets $B$ of $\Cpx$, the spectrum of $T$ restricted to the range of $E(B)$ is contained in the closure of $B$:
        $$ \sigma( T \restrict _{E(B) \HEu}) \subseteq \overline{B}. $$
 \end{enumerate}

\end{defi}

Dunford showed that spectral operators have the following equivalent characterisation. This is a generalization of the well-known result that every matrix is similar to sum of a nilpotent matrix and a diagonal matrix that commute.

\begin{prop}[Dunford]
The following are equivalent:
\begin{enumerate}
    \item $T \in B(\HEu)$ is a spectral operator.
    \item There exist operators $S,N,Q \in B(\HEu)$ such that $S$ is invertible, $N$ is normal, $Q$ is quasinilpotent, $NQ=QN$, and 
    $$ S T S^{-1} = N + Q. $$
\end{enumerate}
\end{prop}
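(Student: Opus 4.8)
The plan is to prove the two implications separately, disposing of $(2)\Rightarrow(1)$ by direct construction and reserving the real work for $(1)\Rightarrow(2)$. For $(2)\Rightarrow(1)$, suppose $STS^{-1}=N+Q$ and write $A=N+Q$. I would first show $A$ is spectral and then transport the structure along the similarity $S$. Let $E_N$ denote the genuine (self-adjoint, projection-valued) spectral measure of the normal operator $N$ furnished by the spectral theorem, and put $E(B)=E_N(B)$. Axioms (i)--(iv) are immediate, with (iv) following from $\norm{E_N(B)}\le 1$. For (v), note that $E_N(B)$ commutes with $N$ by construction; since $QN=NQ$ and $N$ is normal, Fuglede's theorem gives $QN^*=N^*Q$, so $Q$ commutes with every element of $W^*(N)=\{N,N^*\}''$, in particular with each spectral projection $E_N(B)$. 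Hence $E_N(B)$ commutes with $A=N+Q$.

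For axiom (vi), I use that each $E_N(B)\HEu$ reduces both $N$ and $Q$. The restriction $N\restrict_{E_N(B)\HEu}$ is normal with $\sigma(N\restrict_{E_N(B)\HEu})\subseteq\overline{B}$, while $Q\restrict_{E_N(B)\HEu}$ is quasinilpotent (the spectrum of the restriction to a reducing subspace is contained in $\sigma(Q)=\{0\}$) and still commutes with the restriction of $N$; by subadditivity of the spectrum for commuting operators,
$$ \sigma\bigl(A\restrict_{E_N(B)\HEu}\bigr)\subseteq \sigma\bigl(N\restrict_{E_N(B)\HEu}\bigr)+\{0\}\subseteq\overline{B}. $$
This shows $A$ is spectral. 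Finally, if $A$ has resolution $E$, then $T=S^{-1}AS$ is spectral with resolution $B\mapsto S^{-1}E(B)S$: axioms (i)--(iii), (v), (vi) transfer formally, and (iv) survives because $\norm{S^{-1}E(B)S}\le\norm{S^{-1}}\,\norm{S}\,\sup_B\norm{E(B)}$.

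For $(1)\Rightarrow(2)$, starting from the resolution of the identity $E$ of the spectral operator $T$, I would form the \emph{scalar part}
$$ S_0=\int_{\sigma(T)}\lambda\,dE(\lambda), $$
defined as a norm limit of Riemann sums, and set $R=T-S_0$. The Dunford canonical decomposition asserts that $S_0$ is of scalar type with the same resolution $E$, that $R$ commutes with $S_0$, and that $R$ is quasinilpotent; the last point I would recover from axiom (vi) by partitioning $\sigma(T)$ into Borel cells of small diameter and estimating $T-S_0$ on each $E(B_i)\HEu$. The decisive step is then to invoke the classical theorem that a scalar-type operator on a Hilbert space is similar to a normal operator. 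Granting an invertible $S$ with $SS_0S^{-1}=N$ normal, I set $Q=SRS^{-1}$; then $STS^{-1}=N+Q$, where $Q$ is quasinilpotent because similarity preserves the spectrum, and $NQ=QN$ by conjugating $S_0R=RS_0$.

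The main obstacle is exactly this similarity of a scalar-type operator to a normal one, equivalently the statement that a uniformly bounded Boolean algebra of idempotents $\{E(B)\}$ on a Hilbert space is similar, through a single invertible $S$, to a family of orthogonal projections. The standard route produces an equivalent inner product under which all $E(B)$ become self-adjoint: using the uniform bound $\sup_B\norm{E(B)}<\infty$ together with a weak-$*$ compactness or invariant-mean argument over the bounded family, one builds a positive invertible operator $W$ satisfying $WE(B)=E(B)^*W$ for all $B$, and takes $S=W^{1/2}$. Checking that this averaging yields a bounded and boundedly invertible $W$, and that the resulting $N=SS_0S^{-1}$ is genuinely normal for this new geometry, is the technical heart of the argument and is where the Hilbert space structure is indispensable.
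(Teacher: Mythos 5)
The paper itself gives no proof of this proposition: it is stated as a classical theorem of Dunford, with \cite{D54} as the implicit source, so there is no in-paper argument to compare yours against. Your reconstruction is the standard classical proof and is correct in outline. The direction $(2)\Rightarrow(1)$ is essentially complete as written: Fuglede's theorem does give that $Q$ commutes with every spectral projection $E_N(B)$, each $E_N(B)\HEu$ then reduces both $N$ and $Q$ (note $Q^*E_N(B)=E_N(B)Q^*$ follows by taking adjoints, since $E_N(B)$ is self-adjoint), the inclusion $\sigma(A\restrict_{E_N(B)\HEu})\subseteq\sigma(N\restrict)+\sigma(Q\restrict)\subseteq\overline{B}$ via subadditivity of spectra for commuting operators is valid, and transporting the resolution along $S$ costs only the factor $\norm{S}\,\norm{S^{-1}}$ in axiom (iv). For $(1)\Rightarrow(2)$ you correctly organize the argument around Dunford's canonical decomposition $T=S_0+R$ together with the Wermer--Mackey theorem that a scalar-type operator on a Hilbert space is similar to a normal one, and you rightly identify the latter as the technical heart. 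Two places where your sketch would need to be firmed up for a self-contained proof, both of which you honestly flag rather than hide: (a) quasinilpotence of $R=T-S_0$ --- after partitioning $\sigma(T)$ into cells $B_i$ of diameter less than $\eps$, one needs $\sigma(R)\subseteq\bigcup_i\sigma(R\restrict_{E(B_i)\HEu})$ for the non-orthogonal invariant decomposition $1=\sum_i E(B_i)$, and then on each piece $\sigma(R\restrict)\subseteq\sigma(T\restrict)-\sigma(S_0\restrict)\subseteq\overline{B_i}-\overline{B_i}$, again by commuting subadditivity, bounding the spectral radius by $2\eps$ for every $\eps$; (b) the averaging step --- the family $\{E(B)\}$ is not itself a group, so the invariant mean must be applied to the bounded abelian (hence amenable) group generated by the involutions $2E(B)-1$; invariance then yields a positive invertible $W$ with $W(2E(B)-1)=(2E(B)-1)^*W$, which is exactly your relation $WE(B)=E(B)^*W$, and $S=W^{1/2}$ conjugates each $E(B)$ to a self-adjoint idempotent, making $B\mapsto SE(B)S^{-1}$ a genuine spectral measure and $N=SS_0S^{-1}$ normal. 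With those two completions your argument is a full proof; as a proof proposal measured against a paper that cites the result without proof, it is more than adequate.
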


\subsection{Brown measure and Haagerup-Schultz projections}

Throughout the rest of this paper, $(\Mcal,\tau)$ will refer to a von Neumann algebra equipped with a faithful normal tracial state $\tau$.
  
If $N \in \Mcal$ is a normal operator, the projection-valued measure associated with $N$ gives rise to a probability measure on $\Cpx$, given by taking the trace of the corresponding projections.  
L.~ Brown in~\cite{B83} constructed a probability measure $\mu_T$ on $\Cpx$ associated to an arbitrary operator $T$ in $\Mcal$.
The Brown measure $\mu_T$ is just the trace of spectral measure if $T$ is normal.
The Brown measure behaves well with respect to the holomorphic functional calculus of $T$.
For details of the construction, and further properties, see~\cite{B83} and~\cite{HS07}.
  
In~\cite{HS09}, Haagerup and Schultz constructed invariant projections for arbitrary operators in a finite von Neumann algebra.
These subspaces implement a sort of decomposition according to Brown measure.
  
\begin{thm}[\cite{HS09}]\label{thm:hs-proj}
Let $T$ be an operator in a finite von Neumann algebra $(\Mcal,\tau)$ and $B\subseteq \Cpx$ be a Borel set. Then there exists a unique projection $P(T,B) \in \Mcal$ such that the following holds:
\begin{enumerate}[(i)]
\item $\tau(P(T,B))=\mu_T(B)$.
\item $P(T,B)\HEu$ is an invariant subspace for $T$. 
\item The Brown measure of $TP(T,B)$, when computed in the corner \linebreak
$P(T,B)\Mcal P(T,B)$, is concentrated in $B$.
\item The Brown measure of $(1-P(T,B))T$, when computed in the corner $(1-P(T,B))\Mcal (1-P(T,B))$, is concentrated in $B^c$.
\item\label{it:hs-invt} If $Q$ is any $T$-invariant projection with the property that the Brown measure of $TQ$ (computed in $Q\Mcal Q$) is concentrated in $B$, then $Q\leq P(T,B)$.
\end{enumerate}
\end{thm}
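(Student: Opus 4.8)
The statement is the Haagerup--Schultz construction itself, so the plan is to build $P(T,B)$ explicitly and then verify (i)--(v). The whole theorem rests on the case of a closed disk centered at the origin, $\overline{B}_r := \{z\in\Cpx : |z|\le r\}$, and I would organize everything around that case. Granting it, applying the construction to $T-\lambda$ in place of $T$ (which merely translates $\mu_T$ by $\lambda$) produces the projection for every translated disk; the complement of a disk is produced by the same growth-rate construction applied to the fast-growth regime rather than the slow one; and a general Borel set $B$ is then approximated from inside and outside by finite unions and intersections of such disks. The uniqueness asserted in the theorem forces these inner and outer approximants to have a common limit, which one takes as $P(T,B)$, and the set-algebra identities in (i)--(ii) (additivity over disjoint unions, the containments) follow from the corresponding identities for $\mu_T$ together with that uniqueness.

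For the disk, the guiding analogy is finite dimensions, where the generalized eigenspace of $T$ for eigenvalues of modulus $\le r$ is exactly $\{\xi : \limsup_n \norm{T^n\xi}^{1/n}\le r\}$, a $T$-invariant subspace. I would make this precise in $\Mcal$ through the positive operators
$$ A_n := \left( (T^n)^* T^n \right)^{1/(2n)}, $$
showing that $A_n$ converges in the strong operator topology to a positive $A_\infty\in\Mcal$. The essential input is the Fuglede--Kadison determinant $\Delta$, for which $\log\Delta(T-\lambda)=\int_\Cpx \log|z-\lambda|\,d\mu_T(z)$; multiplicativity of $\Delta$ controls the asymptotic singular-value data of $T^n$ and pins the spectral distribution of $A_\infty$ to be the pushforward of $\mu_T$ under $z\mapsto|z|$. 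The projection attached to the disk is then extracted from $A_\infty$, with the caveat that the spectral projection $\mathbf{1}_{[0,r]}(A_\infty)$ need not itself be $T$-invariant; one replaces it by the genuine $T$-invariant projection $p := P(T,\overline{B}_r)$ carrying the same trace.

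I would then verify the five properties. Invariance (ii) is built into the construction, and (i) reads off $\tau(p)=\mu_T(\overline{B}_r)$ from the identification of the spectral distribution of $A_\infty$. Properties (iii) and (iv) are the substance: writing $T$ as a $2\times 2$ operator matrix with respect to $p$ and $1-p$, the $(2,1)$-entry $(1-p)Tp$ vanishes by invariance, so $T$ is upper triangular and $\Delta$ factors across the diagonal. This yields the splitting $\mu_T = \tau(p)\,\mu_{pTp} + \tau(1-p)\,\mu_{(1-p)T(1-p)}$, with the corner Brown measures computed in $p\Mcal p$ and $(1-p)\Mcal(1-p)$; one then shows these live on $\overline{B}_r$ and its complement respectively, using that $A_\infty$ separates the two growth regimes. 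Finally (v) follows from the growth-rate characterization: if $Q$ is $T$-invariant and the Brown measure of $TQ$ is concentrated in $\overline{B}_r$, then every vector of $Q\HEu$ has growth rate $\le r$, so $Q\HEu\subseteq p\HEu$, i.e. $Q\le p$; the same comparison delivers uniqueness.

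The hard part lives entirely in the second paragraph: proving that $A_n$ actually converges strongly and that the resulting projection is honestly $T$-invariant with the correct trace. Convergence is not a soft consequence of boundedness; it demands genuine analytic control of the singular values of $T^n$ uniformly in $n$, for which the only real handle is $\Delta$ and the subharmonicity of $\lambda\mapsto\log\Delta(T-\lambda)$. Closely tied to this, the passage from the (generally non-invariant) spectral projection of $A_\infty$ to an honest $T$-invariant projection, together with the proof that no trace is lost in that passage, is the delicate technical core, and I expect it to require an approximation/perturbation argument and careful use of normality of $\tau$ — precisely the machinery that makes the Haagerup--Schultz argument substantial.
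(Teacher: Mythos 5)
First, a framing point: the paper does not prove this statement at all --- it is quoted, with attribution, from \cite{HS09}, where it is the main theorem of a long and technically formidable paper. So there is no in-paper proof to compare your attempt against, and the only fair test is whether your outline would stand on its own as a reconstruction of the Haagerup--Schultz argument. At the level of architecture it largely does: disks centered at the origin first, translation by $\lambda$ for general disks, a dual ``fast-growth'' construction for complements of disks (these are exactly the two formulas recorded in Proposition~\ref{prop:hsproj-disc}), lattice operations to reach general Borel sets (Proposition~\ref{prop:hsproj-lattice}), control via the Fuglede--Kadison determinant and $\log\Delta(T-\lambda)=\int_\Cpx\log|z-\lambda|\,d\mu_T(z)$, and the s.o.t.\ limit of $A_n=((T^n)^*T^n)^{1/2n}$. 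You also correctly locate where the real work lives.

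That said, two steps are genuinely defective as routed. (1) Your passage from disks to general Borel sets is circular: you invoke ``the uniqueness asserted in the theorem'' to force inner and outer approximants to a common limit, but uniqueness in the theorem is uniqueness among projections satisfying (i)--(v) \emph{for the set $B$}, whose existence is precisely what is being constructed at that stage; the actual argument derives the lattice identities directly from the maximality property (v) together with Borel regularity of $\mu_T$, and only then is uniqueness a two-line consequence of (v) applied in both directions. (2) Your treatment of the disk inverts the logical order of \cite{HS09} in a way that is not innocuous. There, the subspace for $\overline{B(0,r)}$ is constructed first, as $\{\xi:\exists\,\xi_n\to\xi,\ \limsup_n\norm{T^n\xi_n}^{1/n}\le r\}$, and shown to be closed, hyperinvariant, and of trace $\mu_T(\overline{B(0,r)})$; the s.o.t.\ convergence of $A_n$ is a separate, later theorem whose proof \emph{uses} these already-built projections, and the limit's spectral projection $1_{[0,r]}(A_\infty)$ is then identified with the Haagerup--Schultz projection --- in particular it \emph{is} $T$-invariant, contrary to your caveat, while your proposed repair (``replace it by the genuine $T$-invariant projection carrying the same trace'') names no construction at all. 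Making s.o.t.\ convergence the entry point, as you propose, confronts you from scratch with exactly the uniform singular-value estimates that Haagerup and Schultz extract \emph{from} the invariant subspaces. Finally, in your verification of (v), ``every vector of $Q\HEu$ has growth rate $\le r$'' holds only in the approximating-sequence sense (compare the s.o.t.-quasinilpotent characterization of Proposition~\ref{prop:sotqn} applied in the corner $Q\Mcal Q$), not verbatim for each fixed vector, and the disk formula of Proposition~\ref{prop:hsproj-disc} is stated accordingly. None of this is fatal to the program --- it is, after all, the program of \cite{HS09} --- but as written the proposal defers, and in places misstates, the steps that constitute essentially all of the proof.
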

  
For discs and complements of discs, the Haagerup-Schultz projections have the following explicit form (see~\cite{HS09}):
  
\begin{prop}\label{prop:hsproj-disc}
Let $T\in (\Mcal,\tau)$, and $r>0$. Let $r \Dbb$ denote the open disc of radius $r$ centered at $0$. Then
  
\begin{gather*}
P(T,\overline{r\Dbb})\HEu = \left \lbrace \xi \in \HEu: \exists  \xi_n \in \HEu, \lim_n \xi_n = \xi, \limsup_n \norm{T^n\xi_n}^{1/n} \leq r \right \rbrace \\
P(T,(r\Dbb)^c)\HEu =\left\lbrace \eta \in \HEu: \exists \, \eta_n \in \HEu, \lim_{n \to \infty} T^n \eta_n = \eta, \limsup_n \norm{\eta_n}^{1/n} \leq \frac{1}{r}  \right \rbrace_.
\end{gather*}
\end{prop}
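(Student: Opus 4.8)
The plan is to prove each of the two subspace identities by a two-sided containment, pinning down the right-hand sets via the characterizing properties of the Haagerup--Schultz projection recorded in Theorem~\ref{thm:hs-proj}. Write $P = P(T,\overline{r\Dbb})$ and let $\Vc_1$ denote the set on the right-hand side of the first displayed equation. First I would check that $\Vc_1$ is a closed, $T$-invariant linear subspace of $\HEu$: invariance follows because if $\xi_n\to\xi$ witnesses $\xi\in\Vc_1$, then $T\xi_n\to T\xi$ and $\norm{T^n(T\xi_n)}^{1/n}=\norm{T^{n+1}\xi_n}^{1/n}\le(\norm{T}\,\norm{T^n\xi_n})^{1/n}$ has the same $\limsup$, while linearity and closedness come from combining and diagonalizing approximating sequences. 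Letting $P_1$ be the projection onto $\Vc_1$, the growth bound forces the log-moments of the compression $TP_1$ to sit at radius at most $r$, so the Brown measure of $TP_1$ computed in $P_1\Mcal P_1$ is concentrated in $\overline{r\Dbb}$; the maximality property, item~(v) of Theorem~\ref{thm:hs-proj}, then gives $P_1\le P$, that is, $\Vc_1\subseteq P\HEu$.

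For the reverse inclusion $P\HEu\subseteq\Vc_1$ I would work with the corner operator $S=TP\in P\Mcal P$, whose Brown measure is supported in $\overline{r\Dbb}$ by item~(iii). The essential estimate is that for every $\eps>0$,
$$ \tau\bigl(\mathbf{1}_{[(r+\eps)^n,\infty)}(|S^n|)\bigr)\longrightarrow 0 \qquad\text{as } n\to\infty, $$
which expresses that the singular-value distribution of $S^n$, rescaled by $1/n$ in the logarithm, is asymptotically governed by $\log|z|$ with $z$ distributed according to $\mu_S\subseteq\overline{r\Dbb}$. Given $\xi\in P\HEu$, set $E_n=\mathbf{1}_{[0,(r+\eps)^n]}(|S^n|)$ and $\xi_n=E_n\xi$; then $\norm{S^n\xi_n}\le(r+\eps)^n\norm{\xi}$, so $\limsup_n\norm{T^n\xi_n}^{1/n}\le r+\eps$, while in the tracial (standard) representation $\tau(1-E_n)\to 0$ yields $\xi_n\to\xi$. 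A diagonal argument over $\eps=1/k$ produces a single approximating sequence with $\limsup$ at most $r$, placing $\xi$ in $\Vc_1$.

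The second identity, for $P(T,(r\Dbb)^c)$, I would deduce from the first by duality. Using the Haagerup--Schultz behaviour of the projections under complementation of the Borel set and passage to the adjoint $T^*$, the range-growth condition $\lim_n T^n\eta_n=\eta$ with $\limsup_n\norm{\eta_n}^{1/n}\le 1/r$ translates into a slow-growth condition for the complementary subspace: a vector lies in the $(r\Dbb)^c$-subspace exactly when its preimages under the powers of $T$ can be chosen with norm growth at most $1/r$, which is the spectral-radius-at-least-$r$ behaviour on the corner $(1-P)\Mcal(1-P)$ guaranteed by item~(iv). The same spectral-truncation scheme, now applied to the partial inverse of $S'=(1-P)T$, delivers both containments.

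I expect the main obstacle to be the analytic estimate displayed above --- converting support of the Brown measure into decay of the trace of the high-singular-value spectral projections of $S^n$, and then upgrading that trace decay into norm approximation $\xi_n\to\xi$ for individual vectors. This is precisely the hard content underlying the Haagerup--Schultz construction. The complement case adds the further subtlety of handling the partial inverse of $S'$ and the possibility that $\mu_T$ charges the circle $\{|z|=r\}$, which must be accommodated when passing among $\overline{r\Dbb}$, $r\Dbb$, and $(r\Dbb)^c$.
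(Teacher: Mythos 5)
First, a point of comparison: the paper does not prove Proposition~\ref{prop:hsproj-disc} at all --- it is quoted from \cite{HS09}, where these subspace formulas are established as part of the construction of the projections themselves. So your proposal must be judged on its own, and there it has a genuine gap at its central step. In the inclusion $\Vc_1\subseteq P\HEu$ you invoke the maximality property, Theorem~\ref{thm:hs-proj}(v); this requires knowing both that the projection $P_1$ onto $\Vc_1$ lies in $\Mcal$ (you never address this; it follows from invariance of the defining condition under the commutant, since for $x'\in\Mcal'$ one has $\norm{T^nx'\xi_n}\le\norm{x'}\,\norm{T^n\xi_n}$) and, much more seriously, that the Brown measure of $TP_1$ computed in $P_1\Mcal P_1$ is concentrated in $\overline{r\Dbb}$. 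Your one-line justification --- ``the growth bound forces the log-moments of the compression to sit at radius at most $r$'' --- is exactly where the difficulty lives: a vector $\xi\in\Vc_1$ need not itself satisfy $\limsup_n\norm{T^n\xi}^{1/n}\le r$, only its approximants do, and upgrading this approximate growth condition to spectral information about the compression is the hard analytic content of \cite{HS09}. It rests on their main theorem that $(T^{*n}T^n)^{1/2n}$ converges in the strong operator topology to a positive operator whose distribution is the push-forward of $\mu_T$ under $z\mapsto|z|$ --- a result the present paper quotes only in the quasinilpotent special case (Proposition~\ref{prop:sotqn}). Your reverse inclusion $P\HEu\subseteq\Vc_1$ silently uses the same theorem (your displayed trace estimate for $|S^n|$ is a consequence of it, and is fine granted it, including the passage from trace decay of the projections $1-E_n$ to $\xi_n\to\xi$). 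So the scheme as a whole presupposes precisely the machinery from which \cite{HS09} derive the proposition directly; it cannot be closed using only the abstract properties (i)--(v) of Theorem~\ref{thm:hs-proj}.

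The second identity is also not a soft ``duality'' consequence of the first. Complementation within the same operator cannot work: $P(T,\overline{r\Dbb})$ and $P(T,(r\Dbb)^c)$ are \emph{not} orthogonal complements --- the possibly zero angle between exactly such pairs is the subject of this paper. What makes passage to the adjoint work is the orthocomplement identity $1-P(T,B)=P\bigl(T^*,\{\bar z:z\in B^c\}\bigr)$, which you neither state nor prove, and which is again part of the Haagerup--Schultz development rather than a formal consequence of Theorem~\ref{thm:hs-proj}; even with it in hand, one must verify that the range-growth condition $T^n\eta_n\to\eta$, $\limsup_n\norm{\eta_n}^{1/n}\le 1/r$ characterizes the resulting subspace, which is a separate argument of the same depth as the first identity. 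You do rightly flag the boundary circle $\{|z|=r\}$ (note $(r\Dbb)^c$ is closed, so mass on the circle must land on the complementary side), but flagging it is not handling it. In short: as a roadmap your proposal matches the architecture of \cite{HS09}, but the two steps you defer --- Brown-measure concentration for the compression to $\Vc_1$, and the adjoint duality --- are the proof, not details of it.
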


 The Haagerup-Schultz projections also respect lattice operations (see~\cite{Sch06} or Section~3 of~\cite{CDSZ17}).
 
\begin{prop}\label{prop:hsproj-lattice}
Let $T \in (\Mcal,\tau)$, and $B_1,B_2,\ldots \subseteq \Cpx$ be Borel sets. Then
\begin{align*}
P(T, \bigcup_{i\ge1}B_i ) &= \bigvee_{i\ge1}P\left(T,B_i\right) \\
P(T, \bigcap_{i\ge1} B_i ) &= \bigwedge_{i\ge1} P\left(T,B_i\right).
\end{align*}
\end{prop}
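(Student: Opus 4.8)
The plan is to deduce both identities from the uniqueness-and-maximality package of Theorem~\ref{thm:hs-proj} --- in particular its last, maximality, clause --- together with the standard behaviour of the Brown measure under the upper-triangular decomposition $T=\begin{pmatrix}pTp & pT(1-p)\\ 0 & (1-p)T(1-p)\end{pmatrix}$ determined by a $T$-invariant projection $p$, namely $\mu_T=\tau(p)\,\mu_{T_1}+\tau(1-p)\,\mu_{T_2}$ with $T_1=(pTp)\restrict_{p\HEu}$ and $T_2=((1-p)T(1-p))\restrict_{(1-p)\HEu}$ (Haagerup--Schultz). The first step is monotonicity: if $B\subseteq B'$ then $P(T,B)$ is a $T$-invariant projection for which $TP(T,B)$ has Brown measure concentrated in $B\subseteq B'$, so the maximality clause gives $P(T,B)\le P(T,B')$. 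Monotonicity immediately yields the two easy inclusions $\bigvee_{i}P(T,B_i)\le P(T,\bigcup_i B_i)$ and $P(T,\bigcap_i B_i)\le\bigwedge_i P(T,B_i)$, so in each identity only the reverse inclusion remains.

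For the intersection, set $Q=\bigwedge_i P(T,B_i)$. Since each $P(T,B_i)\HEu$ is $T$-invariant, so is their intersection $Q\HEu$; thus $Q$ is a $T$-invariant projection with $Q\le P(T,B_i)$ for every $i$. Applying the Brown-measure decomposition inside the corner $P(T,B_i)\Mcal P(T,B_i)$ to the $T$-invariant subprojection $Q$, and using that the Brown measure of $TP(T,B_i)$ is concentrated in $B_i$ (property (iii)), I conclude that the Brown measure of $TQ$, computed in $Q\Mcal Q$, is concentrated in $B_i$; as this holds for all $i$ it is concentrated in $\bigcap_i B_i$. The maximality clause then gives $Q\le P(T,\bigcap_i B_i)$, which is the desired reverse inclusion. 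Note that this argument applies verbatim to a countable (indeed arbitrary) family, with no limiting step required.

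The union is the more delicate half, because the maximality clause is adapted to upper bounds and does not directly control $P(T,\bigcup_i B_i)$ from above; here I would pass through traces. First, for two disjoint sets the intersection identity gives $P(T,B_1)\wedge P(T,B_2)=P(T,\varnothing)=0$ (as $\tau(P(T,\varnothing))=\mu_T(\varnothing)=0$ and $\tau$ is faithful), so the Kaplansky parallelogram law for the trace yields $\tau(P(T,B_1)\vee P(T,B_2))=\mu_T(B_1)+\mu_T(B_2)=\mu_T(B_1\cup B_2)$; combined with the easy inclusion and faithfulness of $\tau$ this proves the disjoint two-set case, and a short squeeze using $B_1\cup B_2=B_1\sqcup(B_2\setminus B_1)$ and monotonicity removes the disjointness hypothesis. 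For a countable family I would disjointify, $C_n=B_n\setminus\bigcup_{i<n}B_i$, check via the finite union identity and monotonicity that $\bigvee_n P(T,C_n)=\bigvee_n P(T,B_n)$ and $P(T,\bigcup_n C_n)=P(T,\bigcup_n B_n)$, and then compute $\tau\big(\bigvee_{n}P(T,C_n)\big)=\sum_n\mu_T(C_n)=\mu_T(\bigcup_n B_n)$ using finite additivity over joins (again the parallelogram law plus pairwise $P(T,C_i)\wedge P(T,C_j)=0$), normality of $\tau$ to pass to the supremum, and countable additivity of $\mu_T$. Since this matches $\tau(P(T,\bigcup_n B_n))$ and the easy inclusion holds, faithfulness of $\tau$ forces equality. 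The main obstacle is exactly this trace bookkeeping for the union: establishing that the relevant meets vanish (which is why the intersection identity must be proved first) and justifying additivity of the trace over the join of the pairwise-disjoint Haagerup--Schultz projections.
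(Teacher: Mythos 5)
Your proposal is correct in substance, and it is worth noting that the paper itself gives no proof of Proposition~\ref{prop:hsproj-lattice}: it cites \cite{Sch06} and Section~3 of \cite{CDSZ17}. Your argument reconstructs essentially the standard one from those sources, using only what this paper makes available: monotonicity and the intersection identity fall out of the maximality clause (v) of Theorem~\ref{thm:hs-proj} together with the decomposition of Brown measure over a $T$-invariant projection (if $0\neq Q\le P$ are $T$-invariant and $\mu_{TP}$ is concentrated in $B$, then $\mu_{TP}$ is a convex combination with strictly positive weight on $\mu_{TQ}$, forcing $\mu_{TQ}(B)=1$), and the union identity follows by trace bookkeeping from the parallelogram law, normality and faithfulness of $\tau$, and countable additivity of $\mu_T$. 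This is sound, and your observation that the union half cannot be handled by maximality alone is exactly right.

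One step is justified insufficiently as written: finite additivity of the trace over joins. For $n\ge 3$, the pairwise conditions $P(T,C_i)\wedge P(T,C_j)=0$ do \emph{not} in general imply $\tau\bigl(\bigvee_{i\le n}P(T,C_i)\bigr)=\sum_{i\le n}\tau(P(T,C_i))$; the parallelogram law at the inductive step needs $\bigl(\bigvee_{i\le n}P(T,C_i)\bigr)\wedge P(T,C_{n+1})=0$, which does not follow from pairwise vanishing meets for general projections. The repair is one line with tools you already have at that point: by the finite union identity, $\bigvee_{i\le n}P(T,C_i)=P\bigl(T,\bigcup_{i\le n}C_i\bigr)$, and then the intersection identity gives the required meet $P\bigl(T,(\bigcup_{i\le n}C_i)\cap C_{n+1}\bigr)=P(T,\emptyset)=0$, where $P(T,\emptyset)=0$ since $\tau(P(T,\emptyset))=\mu_T(\emptyset)=0$ and $\tau$ is faithful. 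Two smaller points: in the intersection argument you should dispose of the trivial case $Q=0$ separately, since the Brown measure in $Q\Mcal Q$ requires $Q\neq 0$; and the parenthetical claim that the intersection argument works for \emph{arbitrary} families overreaches, as for uncountable families $\bigcap_i B_i$ need not be Borel and the passage from $\mu_{TQ}(B_i)=1$ for all $i$ to $\mu_{TQ}\bigl(\bigcap_i B_i\bigr)=1$ uses countable additivity; for the countable statement actually asserted, everything is fine.
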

  
Note that the above relations imply that $P(T,B)\wedge P(T,B^c)=0$ and $P(T,B)\vee P(T,B^c) = 1$. 
In general, $P(T,B)$ and $P(T,C)$ do not commute, unless one of $B$ and $C$ is a subset of the other.

The projections also behave well with respect to compressions and similarities (see Theorems 2.4.4 and~12.3 in~\cite{CDSZ17}).

\begin{thm}\label{prop:hsproj-sim}
Let $Q \in \Mcal$ be a non-zero, $T$-invariant projection and let $A\in \Mcal$ be invertible.
Then, for all Borel sets $B \subseteq \Cpx$, we have 
\begin{enumerate}[(i)]
\item $P(T,B) \wedge Q = P^{(Q)}(TQ,B)$,
\item  $\mu_{ATA^{-1}} = \mu_T$,
\item $P(ATA^{-1},B)\HEu = \overline{AP(T,B)\HEu}$,
\end{enumerate}
where $P^{(Q)}$ denotes the Haagerup-Schultz projection computed in the compression $Q\Mcal Q$. 
\end{thm}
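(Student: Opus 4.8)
The plan is to derive each of the three claims from the material already recorded: the uniqueness and maximality properties of Theorem~\ref{thm:hs-proj}, the explicit disc descriptions of Proposition~\ref{prop:hsproj-disc}, and the lattice identities of Proposition~\ref{prop:hsproj-lattice}. Part~(ii) is immediate from the construction of the Brown measure. Recall that $\mu_T$ is characterized by $\int_\Cpx \log|z-\lambda|\,d\mu_T(z)=\log\Delta(T-\lambda)$ for all $\lambda\in\Cpx$, where $\Delta$ denotes the Fuglede--Kadison determinant. Since $\Delta$ is multiplicative on $\Mcal$ and $\Delta(A)\Delta(A^{-1})=\Delta(1)=1$ for invertible $A$, one has $\Delta(ATA^{-1}-\lambda)=\Delta\big(A(T-\lambda)A^{-1}\big)=\Delta(T-\lambda)$ for every $\lambda$; hence the two defining logarithmic potentials coincide and, taking distributional Laplacians, $\mu_{ATA^{-1}}=\mu_T$.

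For part~(iii) set $S=ATA^{-1}$, so that $S^n=AT^nA^{-1}$. I would first treat closed discs $B=\overline{r\Dbb}$ via Proposition~\ref{prop:hsproj-disc}: if $\xi\in P(T,\overline{r\Dbb})\HEu$ with $\xi_n\to\xi$ and $\limsup_n\norm{T^n\xi_n}^{1/n}\le r$, then $\eta_n:=A\xi_n\to A\xi$ and $\norm{S^n\eta_n}=\norm{AT^n\xi_n}\le\norm{A}\,\norm{T^n\xi_n}$, so $\limsup_n\norm{S^n\eta_n}^{1/n}\le r$ and $A\xi\in P(S,\overline{r\Dbb})\HEu$. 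This gives $\overline{AP(T,\overline{r\Dbb})\HEu}\subseteq P(S,\overline{r\Dbb})\HEu$, and applying the same estimate to the relation $T=A^{-1}SA$ yields the reverse inclusion; the second formula of Proposition~\ref{prop:hsproj-disc} disposes of $B=(r\Dbb)^c$ in the same way. To pass to an arbitrary Borel set, I would observe that both $B\mapsto P(S,B)\HEu$ and $B\mapsto\overline{AP(T,B)\HEu}$ respect countable unions and intersections of subspaces---the former by Proposition~\ref{prop:hsproj-lattice}, and the latter because the bounded, boundedly invertible $A$ is a homeomorphism of $\HEu$ and hence preserves closed linear spans and intersections of closed subspaces. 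After using translation covariance to reach discs of arbitrary center, these discs and their complements generate the Borel $\sigma$-algebra, and agreement propagates to all Borel sets.

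Part~(i) I would extract from the maximality clause of Theorem~\ref{thm:hs-proj}, applied inside the corner $Q\Mcal Q$ with its renormalized trace. Because $P(T,B)$ and $Q$ are both $T$-invariant, their meet $P(T,B)\wedge Q$ is a projection in $Q\Mcal Q$ that is invariant for the compressed operator $TQ$; the task is to verify that the Brown measure of $TQ\,(P(T,B)\wedge Q)$, computed in the corner, is concentrated in $B$ and that $P(T,B)\wedge Q$ is the largest $TQ$-invariant projection in the corner with this concentration property, so that the uniqueness clause identifies it with $P^{(Q)}(TQ,B)$. I expect the main obstacle to be exactly these Brown-measure comparisons across corners: establishing the concentration and maximality in~(i) is the genuinely delicate point, whereas the translation-covariance and generation step closing~(iii) is routine but must be stated carefully, and the determinant computation of~(ii) is straightforward.
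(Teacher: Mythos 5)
Your parts (ii) and (iii) are essentially correct, and it is worth noting that the paper itself offers no proof of this theorem --- it cites Theorems 2.4.4 and 12.3 of \cite{CDSZ17} --- so your attempt is doing real work rather than paralleling an argument in the text. The Fuglede--Kadison determinant computation in (ii) is the standard argument and is complete. For (iii), the disc case is right (and in fact $AP(T,B)\HEu$ is already closed, since $A$ is a linear homeomorphism, so the closure bar is harmless), and the propagation step you call routine can indeed be closed rigorously: translation covariance $P(T+\lambda,B+\lambda)=P(T,B)$ follows from the uniqueness clause of Theorem~\ref{thm:hs-proj} together with $\mu_{T+\lambda}$ being the translate of $\mu_T$; open discs are increasing countable unions of closed discs, open sets are countable unions of open discs, closed sets are $G_\delta$; and the smallest family containing all open and all closed sets and stable under countable unions and countable intersections is automatically stable under complementation (consider the subfamily of its members whose complements also belong, and check it contains the generators and is stable under the two operations), hence equals the Borel $\sigma$-algebra. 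Since the class of $B$ with $P(ATA^{-1},B)\HEu=\overline{AP(T,B)\HEu}$ has exactly these stability properties by Proposition~\ref{prop:hsproj-lattice} applied on both sides, (iii) follows. No complementation of the set class is available a priori, so this complement-free generation argument is genuinely needed; your phrasing ``these discs and their complements generate the Borel $\sigma$-algebra'' glosses over that, but the fix is as above.

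The genuine gap is part (i), where you correctly identify the two assertions to be verified --- that $e:=P(T,B)\wedge Q$ is $TQ$-invariant with corner Brown measure concentrated in $B$, and that it is maximal among such projections --- and then explicitly defer them; but these assertions \emph{are} the content of (i), and nothing in the toolkit you invoke (Theorem~\ref{thm:hs-proj}, Propositions~\ref{prop:hsproj-disc} and~\ref{prop:hsproj-lattice}) delivers them. The missing ingredient is the convex decomposition of Brown measure along an invariant projection: if $e$ is $X$-invariant then $\mu_X=\tau(e)\,\mu^{(e)}_{Xe}+\tau(1-e)\,\mu^{(1-e)}_{(1-e)X}$ (Brown; Haagerup--Schultz), a result proved in \cite{HS09} and \cite{CDSZ17} but stated nowhere in this paper. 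Granting it, the proof closes: $e$ is invariant under $Tp$ for $p=P(T,B)$, so $\mu^{(e)}_{Te}$ is a convex component of $\mu^{(p)}_{Tp}$ and hence concentrated in $B$, whence $e\le P^{(Q)}(TQ,B)$ by clause (v) of Theorem~\ref{thm:hs-proj} applied in $Q\Mcal Q$; conversely $f:=P^{(Q)}(TQ,B)$ satisfies $Tf=TQf=fTQf$, so $f$ is $T$-invariant with $\mu^{(f)}_{Tf}$ concentrated in $B$, and clause (v) in $\Mcal$ gives $f\le p$, hence $f\le p\wedge Q$. Without the decomposition fact the first direction does not close, and note that the alternative of running Proposition~\ref{prop:hsproj-disc} directly in the corner also fails: for $\xi\in P(T,\overline{r\Dbb})\HEu\cap Q\HEu$ the approximating vectors $\xi_n$ live in $\HEu$, and compressing them by $Q$ destroys the estimate, since $\norm{T^nQ\xi_n}$ is not controlled by $\norm{T^n\xi_n}$ when $Q$ and $T$ do not commute. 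So as written, (i) remains unproved, and you should either import the Brown-measure decomposition with a citation or prove it.
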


Haagerup and Schultz also showed the following equivalence: 
\begin{prop}[\cite{HS09}]\label{prop:sotqn}
 Let $T\in (\Mcal,\tau)$.
 Then the Brown measure $\mu_T$ has support equal to $\{0\}$ if and only if 
$$ \text{s.o.t.-}\lim_{n\to \infty} (T^{*n} T^n)^{1/2n} = 0, $$
where the limit is in the in the strong operator topology.
\end{prop}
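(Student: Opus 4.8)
The plan is to transfer the statement about the Brown measure into one about the Haagerup--Schultz projections $P(T,\overline{r\Dbb})$, and then to read off the behaviour of $(T^{*n}T^n)^{1/2n}$ from the explicit description in Proposition~\ref{prop:hsproj-disc}. First I would record the measure-theoretic reduction: since $\overline{r\Dbb}\downarrow\{0\}$ as $r\downarrow0$ and $\mu_T$ is a probability measure, $\supp\mu_T=\{0\}$ is equivalent to $\mu_T(\overline{r\Dbb})=1$ for every $r>0$, and by Theorem~\ref{thm:hs-proj}(i) together with faithfulness of $\tau$ this is equivalent to $P(T,\overline{r\Dbb})=1$ for every $r>0$. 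Thus it remains to show that $P(T,\overline{r\Dbb})=1$ for all $r>0$ if and only if $A_n:=(T^{*n}T^n)^{1/2n}\to0$ in the strong operator topology. Throughout I would use that the $A_n$ are positive with $\norm{A_n}=\norm{T^n}^{1/n}\le\norm{T}$, so the sequence is uniformly bounded, and that $\norm{T^n\zeta}=\norm{A_n^n\zeta}$ for every $\zeta$, from the polar decomposition of $T^n$.

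The implication ``$A_n\to0$ in the strong operator topology $\Rightarrow P(T,\overline{r\Dbb})=1$'' I expect to be elementary, using only Proposition~\ref{prop:hsproj-disc}. Fix $r>0$ and $\xi\in\HEu$, and let $q_n=\mathbf 1_{[0,r]}(A_n)$ be the spectral projection of $A_n$ for $[0,r]$. Taking $\xi_n=q_n\xi$, functional calculus gives $A_n^nq_n=(A_nq_n)^n$ with $\norm{A_nq_n}\le r$, so $\norm{T^n\xi_n}=\norm{A_n^nq_n\xi}\le r^n\norm{\xi}$ and hence $\limsup_n\norm{T^n\xi_n}^{1/n}\le r$. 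The spectral theorem also gives $r\norm{(1-q_n)\xi}\le\norm{A_n\xi}$, which tends to $0$ because $A_n\to0$ strongly; thus $\xi_n\to\xi$. By Proposition~\ref{prop:hsproj-disc} this places $\xi$ in $P(T,\overline{r\Dbb})\HEu$, and as $\xi$ was arbitrary, $P(T,\overline{r\Dbb})=1$.

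For the converse I would first reduce strong-operator convergence to a trace estimate. Working in the standard (GNS) representation on $L^2(\Mcal,\tau)$, the bound
\[
\norm{A_nb}_2^2=\tau(A_n^2bb^*)\le\norm{b}^2\norm{T}\,\tau(A_n)\qquad(b\in\Mcal),
\]
together with $\norm{A_n}\le\norm{T}$, shows that $\tau(A_n)\to0$ implies $A_n\to0$ in the strong operator topology, so it suffices to prove $\tau(A_n)\to0$. Splitting the spectrum of $A_n$ at $r$ yields, for every $r>0$,
\[
\tau(A_n)\le r+\norm{T}\,\tau\big(\mathbf 1_{(r,\infty)}(A_n)\big),
\]
so the task reduces to showing $\tau(\mathbf 1_{(r,\infty)}(A_n))\to0$ for each fixed $r>0$.

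This last point is where I expect the main obstacle to lie. It amounts to the assertion that the spectral distribution of $A_n=(T^{*n}T^n)^{1/2n}$ concentrates at $0$ — more generally, that it converges weakly to the pushforward of $\mu_T$ under $z\mapsto\lvert z\rvert$ — which should follow from $\supp\mu_T=\{0\}$. The easy Weyl-type inequality $\tau(\mathbf 1_{(r,\infty)}(A_n))\ge\mu_T(\{\lvert z\rvert>r\})$ points the wrong way, and the required reverse, asymptotic estimate cannot be extracted vector-by-vector from Proposition~\ref{prop:hsproj-disc}; it appears to need the Fuglede--Kadison determinants of $T^n-\lambda$ and logarithmic-potential/subharmonicity arguments controlling the singular numbers of $T^n$ from above in terms of $\mu_T$. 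Granting this convergence, $\supp\mu_T=\{0\}$ forces the limiting radial distribution to be $\delta_0$, whence $\tau(\mathbf 1_{(r,\infty)}(A_n))\to0$ and therefore $A_n\to0$ in the strong operator topology.
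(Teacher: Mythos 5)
First, a point of reference: the paper does not actually prove Proposition~\ref{prop:sotqn} --- it is quoted from~\cite{HS09} without proof --- so your proposal has to be judged against what a complete proof requires. Your backward implication (s.o.t.-convergence to $0$ implies $\supp\mu_T=\{0\}$) is correct and self-contained: with $A_n=(T^{*n}T^n)^{1/2n}$ and $q_n=\mathbf{1}_{[0,r]}(A_n)$, the identities $\norm{T^n\zeta}=\norm{A_n^n\zeta}$ and $A_n^nq_n=(A_nq_n)^n$ give $\norm{T^nq_n\xi}\le r^n\norm{\xi}$, while $r\norm{(1-q_n)\xi}\le\norm{(1-q_n)A_n\xi}\le\norm{A_n\xi}\to0$ gives $q_n\xi\to\xi$; Proposition~\ref{prop:hsproj-disc} then yields $P(T,\overline{r\Dbb})=1$, and Theorem~\ref{thm:hs-proj}(i) with faithfulness of $\tau$ gives $\mu_T(\overline{r\Dbb})=1$ for every $r>0$. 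The reductions you make in the forward implication are likewise sound: $\norm{A_nb}_2^2\le\norm{b}^2\norm{T}\,\tau(A_n)$ together with the uniform bound $\norm{A_n}\le\norm{T}$ shows that $\tau(A_n)\to0$ suffices, and the spectral split $\tau(A_n)\le r+\norm{T}\,\tau\big(\mathbf{1}_{(r,\infty)}(A_n)\big)$ reduces everything to $\tau\big(\mathbf{1}_{(r,\infty)}(A_n)\big)\to0$ for each fixed $r>0$.

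But that last statement is not a technical remainder; it is the entire content of the hard implication, and your proposal does not prove it. You ``grant'' that the spectral distributions of the $A_n$ converge weakly to the pushforward of $\mu_T$ under $z\mapsto|z|$. That granted statement is (a consequence of) the main convergence theorem of~\cite{HS09} --- there the $A_n$ are shown to converge in s.o.t.\ to a positive operator $A$ whose spectral projections $\mathbf{1}_{[0,r]}(A)$ are exactly the projections $P(T,\overline{r\Dbb})$ --- and its proof occupies a substantial part of that paper, resting on Fuglede--Kadison determinant estimates and subharmonicity arguments of precisely the kind you name. Since Proposition~\ref{prop:sotqn} is itself part of the Haagerup--Schultz package, invoking that convergence theorem as a black box is essentially circular: you are assuming a statement strictly stronger than the half of the proposition you are trying to prove. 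What is missing, concretely, even in the special case $\supp\mu_T=\{0\}$, is an asymptotic upper bound on the singular-value distribution of $T^n$ in terms of $\mu_T$ (obtained in~\cite{HS09} from determinant identities for the translates $T-\lambda$, for which $\mu_T=\delta_0$ forces the Fuglede--Kadison determinant of $T-\lambda$ to equal $|\lambda|$); no such bound is supplied. Your diagnosis of the obstacle is accurate --- in particular you are right that Proposition~\ref{prop:hsproj-disc} cannot deliver the upper estimate vector-by-vector, since invariant-subspace membership only bounds $\tau(\mathbf{1}_{[0,r]}(A_n))$ from below in the limit --- but diagnosis is not proof: as written, the proposal establishes one implication and correctly localizes, without closing, the other. (A minor aside: the ``easy Weyl-type inequality'' $\tau(\mathbf{1}_{(r,\infty)}(A_n))\ge\mu_T(\{|z|>r\})$ is in general false for fixed $n$ --- Weyl's majorization gives only product inequalities, and a $2\times2$ triangular matrix with a large off-diagonal entry violates the pointwise version --- though it does hold asymptotically; since you use it only rhetorically, this does not affect the argument.)
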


By analogy with the quasinilpotent case, we call an operator with Brown measure concentrated at $\{0\}$ an s.o.t-quasinilpotent operator.

\section{Angles between projections and spectrality}
\label{sec:angles}
 
In previous work \cite{DKU19}, we showed a relationship between the angles between Haagerup-Schultz projections and the spectrality of the operator.
 
\begin{defi}
For closed, nonzero subspaces $V$ and $W$ of a Hilbert space $\HEu$, we define the {\em angle} between them to be
$$ \alpha(V,W) = \inf \{ \text{angle}(v,w): v \in V,\, w \in W,\, v \neq 0,\, w \neq 0\}. $$
For nonzero projections $P,Q \in B(\HEu)$, $\alpha(P,Q)$ will denote the angle between the spaces $P\HEu$ and $Q\HEu$.
\end{defi}
From consideration of the unital C$^*$-agebra generated by two projections (see~\cite{RS89} or~\cite{BS10}) it is known that the cosine of the angle between $P\HEu$ and $Q\HEu$ is equal to the maximum element of the spectrum of $PQP$.
Thus, $\alpha(P,Q)$ is well defined for projections $P$ and $Q$ in a unital C$^*$-algebra, independently of the way the C$^*$-algebra is unitally represented on a Hilbert space.
\begin{defi}
For an operator $T \in \Mcal$, let $\alpha(T)$ denote the infimum of the angles between complementary Haagerup-Schultz projections of $T$.
That is,
$$ \alpha(T)\coloneqq\inf_{\substack{B \subseteq \Cpx,\,B \text{ Borel},\\  0<\mu_T(B)<1}} \alpha(P(T,B),P(T,B^c)). $$ 
We may also write $\alpha_\Mcal(T)$ for $\alpha(T)$, in order to emphasize the von Neumann algebra we consider.
We say $T$ has the {\em uniformly non-zero angle property} (or UNZA property) if $\alpha(T)>0$.
\end{defi}
In \cite{DKU19}, we showed that operators having the UNZA property are those that are similar to the sum a of a normal operator and a commuting s.o.t.-quasinilpotent operator.
\begin{thm}[\cite{DKU19}]\label{thm:angle-sotqn}
Let $T\in \Mcal$. Then the following are equivalent. 
\begin{enumerate}
\item $T$ has the UNZA property.
\item There exist $S, Q, N \in \Mcal$, with $S$ invertible, $NQ=QN$, $N$
 normal, $Q$ s.o.t.quasinilpotent, and 
$$ S T S^{-1} = N + Q. $$
\end{enumerate}  
\end{thm}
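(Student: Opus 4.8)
The plan is to prove the two implications separately, with $(2)\Rightarrow(1)$ being the more routine direction. Assume first that $STS^{-1}=N+Q$ with $N$ normal, $Q$ s.o.t.-quasinilpotent, and $NQ=QN$, and write $R=STS^{-1}$. The first step is to identify the Haagerup--Schultz projections of $R$ with the spectral projections of $N$. Since $Q$ commutes with the normal operator $N$, the Fuglede--Putnam theorem gives $QN^*=N^*Q$, so $Q$ commutes with every spectral projection $E(B)$ of $N$. Consequently each $E(B)$ is $R$-invariant, and in the corner $E(B)\Mcal E(B)$ the operator $RE(B)=NE(B)+QE(B)$ is the sum of a normal operator with spectrum (hence Brown measure) in $\overline B$ and a commuting s.o.t.-quasinilpotent operator; one checks the Brown measure of $RE(B)$ is that of $NE(B)$, concentrated in $\overline B$. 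Applying the uniqueness and maximality clauses of Theorem~\ref{thm:hs-proj} to $E(B)$ and to $E(B^c)=1-E(B)$ yields $P(R,B)=E(B)$ for a sufficient supply of Borel sets $B$. As spectral projections of a normal operator, $E(B)$ and $E(B^c)$ are mutually orthogonal, so $\alpha(P(R,B),P(R,B^c))=\pi/2$ and hence $\alpha(R)>0$.

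It then remains to transfer the UNZA property from $R$ back to $T$ across the similarity $S$. By Theorem~\ref{prop:hsproj-sim}(iii) we have $P(T,B)\HEu=\overline{S^{-1}P(R,B)\HEu}$. The key observation is that an invertible operator can only degrade a uniformly positive angle by a bounded factor: if $V,W$ are complementary subspaces meeting at angle $\theta$, the idempotent $\Ecal$ with range $V$ and kernel $W$ has $\norm{\Ecal}=1/\sin\theta$, and the corresponding idempotent for $\overline{S^{-1}V},\overline{S^{-1}W}$ is $S^{-1}\Ecal S$, whose norm is at most $\norm{S}\,\norm{S^{-1}}/\sin\theta$. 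Taking infima over Borel sets $B$ gives $\sin\alpha(T)\ge \sin\alpha(R)/(\norm{S}\,\norm{S^{-1}})>0$, which is $(1)$.

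For the harder direction $(1)\Rightarrow(2)$, assume $\alpha(T)>0$. For each Borel set $B$ with $0<\mu_T(B)<1$, the projections $P(T,B)$ and $P(T,B^c)$ satisfy $P(T,B)\wedge P(T,B^c)=0$ and $P(T,B)\vee P(T,B^c)=1$ by Proposition~\ref{prop:hsproj-lattice}, and the positive angle between them places their ranges in topological direct-sum position. This produces an idempotent $\Ecal_B\in\Mcal$ with range $P(T,B)\HEu$ and kernel $P(T,B^c)\HEu$; since both ranges are $T$-invariant, $\Ecal_B$ commutes with $T$, and the UNZA bound gives $\norm{\Ecal_B}\le 1/\sin\alpha(T)$ uniformly. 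The plan is to use this uniformly bounded, $T$-commuting family of idempotents, which behaves like an idempotent-valued spectral measure for $T$, to manufacture a single similarity that orthogonalizes the whole system. Concretely, I would build a positive invertible $G\in\Mcal$ implementing a bounded equivalent inner product in which all the $\Ecal_B$ become self-adjoint, and set $S=G^{1/2}$; in the new inner product the projections $P(STS^{-1},B)$ become genuine orthogonal spectral projections, from which a normal operator $N$ is recovered by integrating the coordinate function against the resulting spectral measure, and one sets $Q=STS^{-1}-N$.

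The main obstacle is precisely this global orthogonalization step: passing from the uniformly bounded family $\{\Ecal_B\}$ to a single invertible $S$ that simultaneously straightens the entire lattice of Haagerup--Schultz projections, and then verifying that the remainder $Q=STS^{-1}-N$ is genuinely s.o.t.-quasinilpotent and commutes with $N$. The commutation and nilpotency should follow from the fact that, in the orthogonalized picture, $STS^{-1}$ is upper triangular with respect to the nest coming from the discs $\overline{r\Dbb}$, with $N$ its normal diagonal and $Q$ the strictly upper triangular part; the s.o.t.-quasinilpotence of $Q$ is then read off from clause (iv) of Theorem~\ref{thm:hs-proj} together with Proposition~\ref{prop:sotqn}. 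Controlling the limiting and integration procedure so that $G$ remains bounded and invertible, using only the single scalar bound $\alpha(T)>0$, is where the real work lies.
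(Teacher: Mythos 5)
This theorem is quoted from \cite{DKU19}; the present paper contains no proof, so the comparison must be with the argument there. Your direction $(2)\Rightarrow(1)$ is essentially that argument and is sound: Fuglede--Putnam makes the spectral projections $E(B)$ of $N$ commute with $Q$; invariance of the Brown measure under adding a commuting s.o.t.-quasinilpotent operator identifies the Brown measure of $RE(B)$ in the corner with the normalized restriction of $\mu_N$, concentrated in $B$ (not merely $\overline{B}$, which matters for pinning down boundaries); clause (v) of Theorem~\ref{thm:hs-proj} applied to both $E(B)$ and $E(B^c)$, together with the trace equality $\tau(E(B))=\mu_N(B)=\mu_R(B)$, then forces $P(R,B)=E(B)$; and the idempotent identity $\norm{\Ecal}=1/\sin\theta$ transfers the right angles through $S$ at the cost of the factor $\norm{S}\,\norm{S^{-1}}$. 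That half is correct.

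The direction $(1)\Rightarrow(2)$, however, is a plan rather than a proof, and the two places where it is incomplete are exactly where the content of \cite{DKU19} lies. First, the global orthogonalization is asserted, not constructed. To run it you need that $B\mapsto\Ecal_B$ is a \emph{Boolean homomorphism} ($\Ecal_B\Ecal_C=\Ecal_{B\cap C}$, $\sum_i\Ecal_{B_i}=1$ for finite partitions); the lattice relations of Proposition~\ref{prop:hsproj-lattice} do not by themselves give commutation of these idempotents, and this must be proved. Granting it, the bound on $G$ independent of the partition size is not automatic either: it follows from the random-sign trick --- with $C=1/\sin\alpha(T)$, any signed sum $\sum_i\eps_i\Ecal_{B_i}=\Ecal_{B_+}-\Ecal_{B_-}$ has norm at most $2C$, whence $G_k=\sum_i\Ecal_{B_i}^*\Ecal_{B_i}$ satisfies $(2C)^{-2}\le G_k\le(2C)^2$ uniformly in $k$, and a weak-operator cluster point along refining partitions keeps $G\in\Mcal$. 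Second, and more seriously, your final step would fail as written: the nest $\{P(T',\overline{r\Dbb})\}_{r>0}$ (with $T'=STS^{-1}$) only resolves $|z|$, so no normal $N$ with the correct complex spectral data can be read off the disc nest; one needs a refining two-dimensional grid of Borel sets $B_i$ with $N=\lim_n\sum_i\lambda_iP_i$, $\lambda_i\in B_i$. Moreover $Q=T'-N$ is \emph{not} strictly upper triangular --- its diagonal blocks $T'P_i-\lambda_iP_i$ are nonzero, merely of small Brown support --- and since the Brown measure is not continuous under the norm limit defining $N$, clause (iv) of Theorem~\ref{thm:hs-proj} plus Proposition~\ref{prop:sotqn} do not yield $\mu_Q=\delta_0$. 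What closes this gap is support additivity for commuting sums, $\supp\mu_{a+b}\subseteq\supp\mu_a+\supp\mu_b$ (available via the simultaneous triangularization of \cite{CDSZ17}), applied to $Q=Q_n+(N_n-N)$ where $Q_n=T'-N_n$ has Brown support in a small disc and $\norm{N_n-N}\to0$; note everything here commutes because, after the similarity, complementary Haagerup--Schultz projections are orthogonal complements that are both $T'$-invariant, hence reduce $T'$ --- which is also the (unstated) reason $NQ=QN$ holds at all.
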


\begin{cor}[\cite{DKU19}]
Let $T\in\Mcal$.
Then $T$ is spectral if and only if $T$ is decomposable and has the UNZA property.
\end{cor}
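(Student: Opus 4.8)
The plan is to read the corollary as the exact gap between two structure theorems. Dunford's proposition above characterises spectrality as similarity to a sum $N+Q$ with $N$ normal, $Q$ \emph{norm}-quasinilpotent and $NQ=QN$, while Theorem~\ref{thm:angle-sotqn} characterises the UNZA property by the same kind of decomposition in which $Q$ is merely \emph{s.o.t.}-quasinilpotent. Since a norm-quasinilpotent operator is automatically s.o.t.-quasinilpotent, spectrality is a priori stronger than UNZA; the whole content of the corollary is that, \emph{in the presence of decomposability}, the two coincide. I would therefore prove the two implications separately, with decomposability doing all the work of upgrading s.o.t.-quasinilpotence to norm-quasinilpotence.

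For the forward implication, assume $T$ is spectral. That $T$ is decomposable is a classical consequence of possessing a Dunford spectral measure: given an open cover $\{G_1,G_2\}$ of $\sigma(T)$, one chooses closed sets $F_i\subseteq G_i$ still covering $\sigma(T)$ and takes the invariant subspaces $E(F_i)\HEu$, whose compressions have spectrum in $\overline{F_i}\subseteq G_i$ by axiom~(vi) and which sum to $\HEu$. For the UNZA property, Dunford's proposition yields $STS^{-1}=N+Q$ with $Q$ norm-quasinilpotent, so $\sigma(Q)=\{0\}$; since $\supp\mu_Q\subseteq\sigma(Q)$ we get $\mu_Q=\delta_0$, that is, $Q$ is s.o.t.-quasinilpotent (Proposition~\ref{prop:sotqn}). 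Theorem~\ref{thm:angle-sotqn} then gives $\alpha(T)>0$.

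For the reverse implication, assume $T$ is decomposable and has the UNZA property. Rather than passing through the $N+Q$ decomposition, I would build a candidate Dunford spectral measure directly from the Haagerup--Schultz projections: for Borel $B$, let $E(B)$ be the idempotent with range $P(T,B)\HEu$ and kernel $P(T,B^{c})\HEu$, which is well defined because $P(T,B)\wedge P(T,B^{c})=0$ and $P(T,B)\vee P(T,B^{c})=1$ (Proposition~\ref{prop:hsproj-lattice}). The UNZA hypothesis is exactly what makes $P(T,B)\HEu$ and $P(T,B^{c})\HEu$ complementary closed subspaces meeting at a uniformly positive angle, so each $E(B)$ is a bounded idempotent with $\sup_B\norm{E(B)}<\infty$; this is axiom~(iv). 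Axiom~(i) is immediate, axiom~(v) follows because both ranges are $T$-invariant (Theorem~\ref{thm:hs-proj}(ii)), and the countable additivity~(iii) together with the multiplicative Boolean relation~(ii) follow from the lattice identities of Proposition~\ref{prop:hsproj-lattice}.

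The main obstacle is axiom~(vi), namely $\sigma(T\restrict_{E(B)\HEu})\subseteq\overline{B}$. Theorem~\ref{thm:hs-proj}(iii) provides only that the \emph{Brown measure} of the compression $TP(T,B)$ lies in $\overline{B}$, and Brown-measure containment is strictly weaker than spectral containment---this is precisely the reason s.o.t.-quasinilpotent operators need not be quasinilpotent. Closing this gap is where decomposability is indispensable. Concretely, I would use decomposability to produce, for each closed set $F$, a spectral maximal subspace $X_T(F)$ with $\sigma(T\restrict_{X_T(F)})\subseteq F$, and then use the maximality property Theorem~\ref{thm:hs-proj}(\ref{it:hs-invt}) to sandwich $P(T,B)\HEu$ between these subspaces (for discs $B=r\Dbb$ and slightly larger sets), transferring their genuine spectral containment to the Haagerup--Schultz idempotents. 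Equivalently, in the $N+Q$ picture this step shows that decomposability forces $\sigma(Q)=\{0\}$. Once axiom~(vi) is verified, $E$ is a Dunford spectral measure for $T$, so $T$ is spectral, completing the proof.
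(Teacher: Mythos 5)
Your forward implication is essentially correct and is the standard argument: spectrality gives $STS^{-1}=N+Q$ with $Q$ norm-quasinilpotent, so $\supp\mu_Q\subseteq\sigma(Q)=\{0\}$, Proposition~\ref{prop:sotqn} makes $Q$ s.o.t.-quasinilpotent, and Theorem~\ref{thm:angle-sotqn} yields the UNZA property; decomposability of spectral operators is classical, and your shrinking-the-cover argument with $E(F_i)\HEu$ is fine.

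The reverse implication, however, has a genuine gap at exactly the point you yourself flag as the main obstacle, and the mechanism you cite to close it points in the wrong direction. Theorem~\ref{thm:hs-proj}(\ref{it:hs-invt}) produces only \emph{lower} bounds on Haagerup--Schultz projections: if $X_T(F)$ is a spectral maximal subspace, then $\sigma(T\restrict_{X_T(F)})\subseteq F$ implies the Brown measure of that restriction is concentrated in $F$ (support of Brown measure lies in the spectrum, and the projection onto $X_T(F)$ belongs to $\Mcal$ by hyperinvariance), so Theorem~\ref{thm:hs-proj}(\ref{it:hs-invt}) gives $X_T(F)\subseteq P(T,F)\HEu$. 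Axiom (vi) needs the \emph{opposite} containment, $P(T,\overline{B})\HEu\subseteq X_T(F')$ for closed neighborhoods $F'$ of $\overline{B}$, and by the defining maximality of $X_T(F')$ that containment is literally equivalent to the spectral inclusion $\sigma(T\restrict_{P(T,\overline{B})\HEu})\subseteq F'$ you are trying to prove; your sandwich is therefore circular, and decomposability plus Theorem~\ref{thm:hs-proj}(\ref{it:hs-invt}) alone cannot deliver it. The proof this paper relies on (in \cite{DKU19}) never rebuilds the resolution of the identity: it applies Theorem~\ref{thm:angle-sotqn} to get $STS^{-1}=N+Q$ with $Q$ s.o.t.-quasinilpotent commuting with $N$ (hence, by Fuglede, with all spectral projections $E_N(F)$), uses that decomposability is a similarity invariant, identifies $P(N+Q,F)=E_N(F)$ for closed $F$ (comparison via Theorem~\ref{thm:hs-proj}(\ref{it:hs-invt}) plus equality of traces, since $\mu_{N+Q}=\mu_N$), and then exploits decomposability of $N+Q$ --- for instance through the fact that a decomposable operator's spectrum equals the support of its Brown measure, applied locally --- to force $\sigma(Q)=\{0\}$, after which Dunford's criterion gives spectrality. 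If you insist on your spectral-measure route, you would still need that $N+Q$ analysis (or an equivalent) to verify axiom (vi); note also that axioms (ii)--(iii) do not follow from the lattice identities alone but need the compression results (Theorem~\ref{prop:hsproj-sim}(i), as in Lemma~\ref{lem:sub-angle}) together with the uniform angle bound to get multiplicativity and s.o.t.\ countable additivity --- those steps are repairable, but (vi) is not with the tools you invoke.
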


\section{DT-operators}
\label{sec:DTops}
 
We now turn to the study of DT-operators, which were
introduced in~\cite{DH04a}.
These are elements of a von Neumann algebra $\Mcal$ equipped with a normal faithful tracial state $\tau$,
of the form $Z=D+cT\in\Mcal$, where $D$ belongs to a copy $\Dc$ of $L^\infty[0,1]$ in $\Mcal$
(whereon $\tau$ is given by integration with respect to Lebesgue measure), $c>0$  and $T$ is
the quasinilpotent DT-operator, that is constructed in a particular way from $\Dc$ and a semicircular element $X$
that is free from $\Dc$, by ``cutting out the upper triangle'' of $X$.
We call $Z$ a $\DT(\mu,c)$-operator, where $\mu$ is the distribution of $D$ with respect to $\tau$.
Moreover, any element of a tracial von Neumann algebra having the same $*$-moments as $Z$ is called a $\DT(\mu,c)$-operator.
The DT-operators also arise as limiting $*$-distributions of certain upper triangular random matrices.

The following is is a synthesis of results taken from~\cite{DH04a}.
\begin{thm}\label{thm:d+t}
Let $X \in (\Mcal,\tau)$ be a semicircular operator with $\tau(X^2)=1$, $\tau(X)=0$, and let $$\lambda:L^\infty([0,1]) \to \Mcal$$ be a normal, unital, injective $*$-homomorphism such that $X$ and the image of $\lambda$ are free with respect to $\tau$. 
Then, there exists an operator $T \in \Mcal$, constructed from $X$ and $\lambda$ is a prescribed way and written $T=\UT(X,\lambda)$, such that the following holds:
\begin{enumerate}[(i)]
\item For all $0<t<1$, 
$$ T \lambda(1_{[0,t]}) = \lambda(1_{[0,t]}) T \lambda(1_{[0,t]}). $$
\item $X = T + T^*$.
\item For all $0<t<1$, $\lambda(1_{[0,t]})T\lambda(1_{[t,1]}) = \lambda(1_{[0,t]})X\lambda(1_{[t,1]})$.
\item If $f \in L^\infty([0,1])$, $c>0$, and $D=\lambda(f)$, then $D+cT$ is a $\DT(\mu,c)$ operator, where $\mu$ is the push-forward of the Lebesgue measure by $f$.
\item $T$ is quasinilpotent.
\end{enumerate}
\end{thm}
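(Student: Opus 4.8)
The plan is to realize $T=\UT(X,\lambda)$ as the ``strictly upper triangular part'' of $X$ relative to the continuous filtration of projections $p_t:=\lambda(1_{[0,t]})$, which increase from $0$ to $1$ with $\tau(p_t)=t$, and to read off (i)--(v) in the limit from the corresponding features of finite truncations. Concretely, for each $k$ put $q_i^{(k)}:=\lambda(1_{[(i-1)2^{-k},\,i2^{-k}]})$ for $i=1,\dots,2^k$; these are pairwise orthogonal projections summing to $1$, each of trace $2^{-k}$, and I set
\[
T_k:=\sum_{1\le i<j\le 2^k} q_i^{(k)}\,X\,q_j^{(k)}.
\]
The first task is to establish convergence of $(T_k)_k$ and to identify the limit as $T$.

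I would prove convergence in the $\norm{\cdot}_2$-norm coming from $\tau$. Because the level-$(k+1)$ partition refines the level-$k$ one, every off-diagonal block of $T_k$ is retained in $T_{k+1}$, and a short bookkeeping yields
\[
T_{k+1}-T_k=\sum_{i=1}^{2^k} q_{2i-1}^{(k+1)}\,X\,q_{2i}^{(k+1)},
\]
that is, only the strictly upper parts of the former diagonal blocks are added. Using freeness of $X$ from the image of $\lambda$ together with $\tau(X)=0$, $\tau(X^2)=1$, one checks for orthogonal (or equal) projections $q,q'$ of trace $\beta$ that $\tau(qXq'X)=\beta^2$; hence each summand above has squared $2$-norm $2^{-2(k+1)}$, and $\norm{T_{k+1}-T_k}_2^2=2^{-k-2}$. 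This is summable, so $(T_k)_k$ is Cauchy in $L^2(\Mcal,\tau)$ and converges to some $T\in L^2(\Mcal,\tau)$. The same computation gives $\norm{\sum_i q_i^{(k)}Xq_i^{(k)}}_2^2=2^{-k}\to0$, and since $T_k+T_k^*=X-\sum_i q_i^{(k)}Xq_i^{(k)}$, letting $k\to\infty$ proves (ii), namely $X=T+T^*$.

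I expect the main obstacle to be boundedness: the $2$-norm convergence does not by itself place $T$ in $\Mcal$, and since triangular truncation is not uniformly bounded in operator norm one should anticipate $\sup_k\norm{T_k}=\infty$, so no soft estimate suffices. Here I would invoke the moment analysis of~\cite{DH04a}: estimating the traces $\tau((T^*T)^m)$, equivalently controlling $\lim_m\tau((T^*T)^m)^{1/2m}$, shows that the $L^2$-limit in fact lies in $\Mcal$ and is bounded. The same combinatorial machinery yields (iv) and (v). For (iv) one verifies that the mixed $*$-moments of $D+cT$ with $D=\lambda(f)$ coincide with those prescribed for a $\DT(\mu,c)$-operator, $\mu$ being the push-forward of Lebesgue measure under $f$; since $T$ was built from a semicircular element free from $\Dc$ exactly as in the definition, this is in essence a matching of constructions. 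For (v), each truncation is genuinely nilpotent, as $T_k^{\,p}=\sum_{i_0<\dots<i_p} q_{i_0}^{(k)}X q_{i_1}^{(k)}\cdots X q_{i_p}^{(k)}$ vanishes once $p\ge 2^k$; to upgrade this to quasinilpotence of $T$ I would bound $\tau((T^*)^n T^n)$ and show it decays super-exponentially, forcing $\norm{T^n}^{1/n}\to0$.

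The triangularity statements (i) and (iii) then pass cleanly to the limit. For a dyadic $t=j2^{-k}$ the block structure gives $(1-p_t)T_{k'}p_t=0$ and $p_tT_{k'}(1-p_t)=p_tX(1-p_t)$ for all $k'\ge k$; the convergence $T_{k'}\to T$ in $\norm{\cdot}_2$ transfers these identities to $T$ at every dyadic $t$, and they extend to arbitrary $t$ by approximating $p_t$ through the family $\lambda(1_{[0,t]})$, which is continuous in $\norm{\cdot}_2$. The genuinely hard content is thus isolated in the boundedness and quasinilpotence claims, both of which rest on the moment estimates of~\cite{DH04a} rather than on limiting arguments.
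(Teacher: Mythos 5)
A preliminary remark on the benchmark: the paper proves nothing here --- Theorem~\ref{thm:d+t} is explicitly presented as ``a synthesis of results taken from~\cite{DH04a}'' --- so the only meaningful comparison is with the construction in~\cite{DH04a}, and your outline is that construction in skeleton form: triangular truncations $T_k=\sum_{i<j}q_i^{(k)}Xq_j^{(k)}$ along refining dyadic partitions, passage to a limit, and moment combinatorics for the hard properties. Your $L^2$ bookkeeping is correct: by freeness and $\tau(X)=0$, $\tau(X^2)=1$ one indeed gets $\tau(qXq'X)=\tau(q)\tau(q')$, whence $\norm{T_{k+1}-T_k}_2^2=2^{-k-2}$ and $\norm{\sum_i q_i^{(k)}Xq_i^{(k)}}_2^2=2^{-k}$, giving (ii) in the limit; the dyadic-plus-density argument for (i) and (iii) is fine once $T$ is known to lie in $\Mcal$; and under this paper's definition of a $\DT(\mu,c)$-operator (same $*$-moments as the constructed model), item (iv) is, as you say, essentially a matching of constructions --- it is only under the original random-matrix definition in~\cite{DH04a} that (iv) carries real content.

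However, two of your structural claims are wrong, and one is a genuine gap. First, your anticipation that $\sup_k\norm{T_k}=\infty$ is false; freeness is exactly what kills it. The blocks $a_i=q_{2i-1}^{(k+1)}Xq_{2i}^{(k+1)}$ satisfy $a_i^*a_j=0=a_ia_j^*$ for $i\ne j$, so $\norm{T_{k+1}-T_k}=\max_i\norm{a_i}$; and since the compression of $X$ by a projection of trace $\alpha$ in $\lambda(L^\infty([0,1]))$ is, with respect to the renormalized trace, semicircular of variance $\alpha$ (the same fact used in the proof of Theorem~\ref{thm:dt-ut}), one gets $\norm{a_i}\le\norm{(q_{2i-1}^{(k+1)}+q_{2i}^{(k+1)})X(q_{2i-1}^{(k+1)}+q_{2i}^{(k+1)})}=2\cdot2^{-k/2}$. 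Hence $(T_k)$ is Cauchy in operator norm and boundedness of $T$ is soft --- no moment analysis is needed, and this also spares you the circularity of speaking of $\tau((T^*T)^m)$ for an a priori unbounded $L^2$-limit. Second, and more seriously, your route to (v) is invalid as stated: super-exponential decay of $\tau((T^n)^*T^n)$ yields only $\norm{T^n}_2^{1/n}\to0$, which by Proposition~\ref{prop:sotqn} says that the Brown measure of $T$ is $\delta_0$, i.e., that $T$ is s.o.t.-quasinilpotent --- a strictly weaker property than quasinilpotence, and the gap between these two notions is a central theme of this very paper. To conclude $\norm{T^n}^{1/n}\to0$ one must control all the moments $\tau\big(((T^n)^*T^n)^k\big)$ uniformly in $k$, since $\norm{x}=\lim_k\tau((x^*x)^k)^{1/2k}$ in a finite von Neumann algebra (compare the device in the proof of Lemma~\ref{lem:norms}); that uniform combinatorial estimate is precisely the hard part of the quasinilpotence proof in~\cite{DH04a} and cannot be replaced by the single trace you name. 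In short, you have mislocated the difficulty: boundedness is easy, quasinilpotence is where the real work of~\cite{DH04a} sits, and your proposed upgrade to (v) would fail without it.
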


We call $T$ the {\em quasinilpotent $\DT$-operator}.

The following is a generalization of part of Theorem~4.12 of~\cite{DH04a}.
 
\begin{thm}\label{thm:dt-ut}
 Let $c>0$, and $\mu,\mu_1,..\mu_n$ be compactly supported Borel probability measures on $\Cpx$ such that $\mu$ is a convex combination of $\mu_i$:
$$ \mu =\sum_{i=1}^n t_i \mu_i $$
for some $0<t_i<1$ such that $\sum_it_i=1$.
Then there is an example of a $\DT(\mu,c)$-operator $Z$ in a von Neumann algebra $\Mcal$
with the following properties:
\begin{enumerate}[(i)]
\item $\Mcal$ has a $*$-subalgebra $\Nc$ and a semicircular element $X$ normalized so that $\tau(X^2)=1$ and such that $\Nc$ and $\{X\}$ are free.
\item There are projections $p_1,\ldots,p_n$ in $\Nc$ so that $\tau(p_j)=t_j$ and $p_1+\cdots+p_n=1$.
\item For each $j$, there is a $\DT(\mu_j,c\sqrt{t_j})$-operator $a_j\in p_j\Nc p_j$, with respect to the renormalized trace $t_i^{-1}\tau$
\item Letting $b_{ij}=cp_iXp_j$ for each  $i<j$,
\[
Z=\sum_{j=1}^n a_j+\sum_{1\le i<j\le n}b_{ij}
=\begin{pmatrix}
a_1 & b_{12} & \cdots & \cdots & b_{1n} \\
0 & a_2 & b_{23} & & \vdots \\
\vdots & \ddots & \ddots & \ddots & \vdots\\
\vdots && 0 & a_{n-1} & b_{n-1,n} \\[1.5ex]
0& \cdots & \cdots & 0 & a_n
\end{pmatrix}_,
\]
where  we have used the natural matrix notation with respect to the projections $p_1,\ldots,p_n$.
\end{enumerate}

Furthermore, if there is a Borel set $B\subseteq\Cpx$ so that $\mu_1$ is  concentrated in $B$
and, for each $2\le j\le n$, $\mu_j$ is concentrated in $B^c$,
then $p_1=P(Z,B)$.

\end{thm}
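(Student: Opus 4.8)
The plan is to identify $p_1$ with the Haagerup--Schultz projection $P(Z,B)$ by verifying the characterizing properties of Theorem~\ref{thm:hs-proj}, the essential input being the fact from~\cite{DH04a} that the Brown measure of any $\DT(\nu,c)$-operator is exactly $\nu$. I would use this fact twice: once for the corner operator $a_1$, and once for $Z$ itself in order to match traces.

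First I would record that $p_1$ is $Z$-invariant. Reading the matrix in part~(iv), the strictly lower entries vanish, so $p_iZp_1=0$ for $i>1$, and hence $Zp_1=p_1Zp_1=a_1$; thus $p_1\HEu$ is invariant for $Z$, which is property~(ii) of Theorem~\ref{thm:hs-proj}. Next I would work inside the corner $p_1\Mcal p_1$ equipped with the renormalized trace $t_1^{-1}\tau$. There the element $Zp_1=a_1$ already lies in $p_1\Mcal p_1$, and by part~(iii) it is a $\DT(\mu_1,c\sqrt{t_1})$-operator with respect to $t_1^{-1}\tau$. By the Brown-measure fact, its Brown measure computed in $p_1\Mcal p_1$ equals $\mu_1$, and since $\mu_1$ is concentrated in $B$ (that is, $\mu_1(B^c)=0$), this Brown measure is concentrated in $B$. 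The maximality property~\ref{it:hs-invt} of Theorem~\ref{thm:hs-proj}, applied in the compressed algebra, then yields $p_1\le P(Z,B)$.

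It then remains to match traces. Since $Z$ is a $\DT(\mu,c)$-operator with $\mu=\sum_i t_i\mu_i$, its Brown measure is $\mu$, so $\mu_Z(B)=\sum_i t_i\mu_i(B)$. The hypotheses give $\mu_1(B)=1$ and $\mu_j(B)=0$ for $2\le j\le n$, whence $\mu_Z(B)=t_1$. By Theorem~\ref{thm:hs-proj}(i) this gives $\tau(P(Z,B))=\mu_Z(B)=t_1=\tau(p_1)$, the last equality being part~(ii). Combining $p_1\le P(Z,B)$ with $\tau\!\left(P(Z,B)-p_1\right)=0$ and faithfulness of $\tau$ forces $P(Z,B)-p_1=0$, i.e. $p_1=P(Z,B)$.

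The main obstacle is the external input that the Brown measure of a DT-operator coincides with its defining measure; everything else is bookkeeping with the block form and the normalizations of the traces on the corners. A secondary point to handle carefully is that property~\ref{it:hs-invt} is invoked in $p_1\Mcal p_1$, so one must confirm that the relevant Brown measure of $a_1$ is the one computed with respect to $t_1^{-1}\tau$, which is precisely the normalization under which $a_1$ is a $\DT(\mu_1,c\sqrt{t_1})$-operator. I would emphasize that this route never analyzes the lower-right block directly: the single inequality $p_1\le P(Z,B)$ together with the global trace count suffices, and the concentration of $\mu_2,\dots,\mu_n$ in $B^c$ enters only through the evaluation $\mu_Z(B)=t_1$.
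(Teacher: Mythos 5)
Your treatment of the final clause is correct, and it is essentially verbatim the paper's own two-sentence argument: $Zp_1=p_1Zp_1=a_1$ gives invariance; the Brown measure of $a_1$, computed in $p_1\Mcal p_1$ with respect to $t_1^{-1}\tau$, equals $\mu_1$ and is concentrated in $B$; Theorem~\ref{thm:hs-proj}(\ref{it:hs-invt}) then gives $p_1\le P(Z,B)$; and the trace count $\tau(P(Z,B))=\mu_Z(B)=t_1\mu_1(B)+\sum_{j\ge2}t_j\mu_j(B)=t_1=\tau(p_1)$ together with faithfulness of $\tau$ forces equality. Your attention to which trace normalizes the corner Brown measure is also right.

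The genuine gap is that this clause is only the coda of the theorem. The principal assertion is existential: one must \emph{construct} an example of a $\DT(\mu,c)$-operator $Z$ satisfying (i)--(iv), and your proposal assumes all four properties outright (``reading the matrix in part~(iv)'', ``by part~(iii)''). The nontrivial point your route never confronts is why a \emph{single} semicircular element $X$ can simultaneously supply every off-diagonal block $b_{ij}=cp_iXp_j$ of $Z$ while being free from one subalgebra $\Nc$ that contains all the diagonal DT-blocks $a_1,\dots,a_n$; this simultaneous freeness is exactly what the theorem is for (it is used in the proof of Lemma~\ref{lem:dtangle}) and does not follow from the definition of a DT-operator. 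The paper manufactures it: take $\lambda\colon L^\infty([0,1])\to\Mcal$ and semicircular $X,\Xt$ with $(\{X\},\{\Xt\},\lambda(L^\infty([0,1])))$ a free family, set $Y=\sum_i p_i\Xt p_i+\sum_{i\ne j}p_iXp_j$, invoke Lemma~\ref{lem:semic} (a consequence of Voiculescu's random matrix results) to conclude that $Y$ is again semicircular and free from $\lambda(L^\infty([0,1]))$, and put $T=\UT(Y,\lambda)$ and $Z=\lambda(f)+cT$ for an $f$ assembled so that its push-forward restricted to the $i$-th block is $\mu_i$. Lemma~4.10 of~\cite{DH04a} then identifies $p_iTp_i=\UT(p_i\Xt p_i,\lambda_i)$, so each $a_i=p_iZp_i$ lies in $\Nc=W^*(\lambda(L^\infty([0,1])),\Xt)$ and is a $\DT(\mu_i,c\sqrt{t_i})$-operator in the corner, while $p_iTp_j=p_iXp_j$ for $i<j$ and $\Nc$ is free from $\{X\}$ by construction. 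Without this (or an equivalent) construction, properties (i)--(iv) --- and hence the starting point of your argument --- remain unproved.
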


The proof uses the following result, which follows immediately from Voicu\-lescu's random matrix results~\cite{V91}
by an argument, pioneered by Voiculescu, that is well known and which we omit.

\begin{lemma}\label{lem:semic}

Suppose $\Dc$ is a copy of $L^\infty([0,1])$ in $\Mcal$
and
$X,\tilde{X}$ are semicircular elements in $\Mcal$, with $\tau(X^2)=\tau(\tilde{X}^2)=1$, such that $(\{X\},\{\tilde{X}\},\Dc)$ is a free family. 
Let $p_1,\ldots,p_N\in\Dc$ be projections with $\sum p_i = 1$.
Then
\[ 
Y=\sum_{i=1}^N p_i X p_i + \sum_{i<j} \left( p_i \tilde{X} p_j + p_j \tilde{X} p_i \right)
\] 
is a semicircular element in $\Mcal$, with $\tau(Y^2)=1$.
Moreover, $\{Y\}$ and $\Dc$ are free.
\end{lemma}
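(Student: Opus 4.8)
The plan is to realize $Y$ through the standard random-matrix model and invoke Voiculescu's asymptotic freeness theorem, exactly as the sentence preceding the lemma anticipates. First I would fix, for each $n$, a decomposition of $\{1,\dots,n\}$ into blocks $I_1,\dots,I_N$ with $|I_i|/n\to\tau(p_i)$, and let $P_i^{(n)}$ be the corresponding diagonal coordinate projections in $M_n(\Cpx)$; these deterministic matrices model the $p_i$, and together with a fixed family of diagonal matrices modelling $\Dc$ they converge in $*$-distribution to $\Dc$. I would take two independent $n\times n$ GUE matrices $X^{(n)},\tilde X^{(n)}$, each normalized so as to converge to a standard semicircular element; these are asymptotically free from one another and from the diagonal matrices, so that $(X^{(n)},\tilde X^{(n)},\text{diagonals})$ models the free family $(\{X\},\{\tilde X\},\Dc)$. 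Then $Y^{(n)}=\sum_i P_i^{(n)}X^{(n)}P_i^{(n)}+\sum_{i\ne j}P_i^{(n)}\tilde X^{(n)}P_j^{(n)}$ models $Y$.

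The key observation is that $Y^{(n)}$ is itself, in distribution, a single GUE matrix of the same normalization. Indeed, its diagonal blocks are filled with the entries of $X^{(n)}$ and its off-diagonal blocks with the entries of the independent matrix $\tilde X^{(n)}$; since $X^{(n)}$ is self-adjoint the blocks $P_i^{(n)}\tilde X^{(n)}P_j^{(n)}$ and $P_j^{(n)}\tilde X^{(n)}P_i^{(n)}$ are adjoints of one another, so the Hermitian symmetry is respected, and every entry of $Y^{(n)}$ is an independent complex Gaussian of variance $1/n$ (real on the diagonal). Thus $Y^{(n)}$ has exactly the law of a GUE matrix. By Voiculescu's theorem it converges to a standard semicircular element and is asymptotically free from the deterministic diagonal matrices. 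Passing to the limit yields that $Y$ is semicircular with $\tau(Y^2)=1$ and that $\{Y\}$ and $\Dc$ are free.

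The main obstacle is purely bookkeeping: one must check that the block structure can be chosen compatibly with an approximation of all of $\Dc$ (not merely the $p_i$) by diagonal matrices, and that convergence in $*$-distribution together with asymptotic freeness genuinely transfers to the limit objects $Y$ and $\Dc$. This is the well known argument the authors suppress.

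For readers who prefer to avoid limits, I would instead give an operator-valued proof. The scalar freeness of $(\{X\},\{\tilde X\},\Dc)$ yields freeness with amalgamation over $\Dc$, so that $(X,\tilde X)$ is a $\Dc$-valued semicircular family; the self-adjoint element $Y$, being a combination of its members with coefficients $p_i\in\Dc$, is then again $\Dc$-valued semicircular, and its law over $\Dc$ is determined by the covariance $\eta(d)=E_\Dc[YdY]$, where $E_\Dc$ is the trace-preserving conditional expectation onto $\Dc$. The crux is the computation of $\eta$: since terms containing both $X$ and $\tilde X$ vanish by freeness and $E_\Dc[XdX]=E_\Dc[\tilde Xd\tilde X]=\tau(d)\,1$, the diagonal part contributes $\sum_i\tau(dp_i)p_i$ while the off-diagonal part contributes $\tau(d)\,1-\sum_i\tau(dp_i)p_i$; these cancel to give $\eta(d)=\tau(d)\,1$. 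A scalar covariance means exactly that $Y$ is semicircular and free from $\Dc$, which is the assertion.
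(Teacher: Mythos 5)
Your first argument is precisely the proof the paper has in mind: the authors state that the lemma ``follows immediately from Voiculescu's random matrix results~\cite{V91} by an argument, pioneered by Voiculescu, that is well known and which we omit,'' and your GUE realization --- observing that filling the diagonal blocks from one GUE matrix and the off-diagonal blocks from an independent GUE matrix again produces a matrix with the exact law of a GUE, then invoking asymptotic freeness from constant diagonal matrices and transferring to the limit via the fact that freeness plus the marginal distributions determines the joint $*$-distribution of $(X,\tilde X,\Dc)$ --- is exactly that omitted argument, correctly executed. One wording slip: the Hermitian symmetry of the off-diagonal blocks $P_i^{(n)}\tilde X^{(n)}P_j^{(n)}$ and $P_j^{(n)}\tilde X^{(n)}P_i^{(n)}$ follows from $\tilde X^{(n)}$ being self-adjoint, not $X^{(n)}$; since both are self-adjoint, nothing breaks.

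Your operator-valued alternative is a genuinely different route and is also correct. Its two pillars are standard: scalar freeness of $(\{X\},\{\tilde X\},\Dc)$ implies that $W^*(X,\Dc)$ and $W^*(\tilde X,\Dc)$ are free with amalgamation over $\Dc$ and that $(X,\tilde X)$ is a $\Dc$-valued semicircular family with $E_{\Dc}(XdX)=E_{\Dc}(\tilde Xd\tilde X)=\tau(d)1$ and vanishing mixed covariance; and an element that is $\Dc$-valued semicircular with \emph{scalar} covariance $\eta(d)=\tau(d)1$ has the same $\Dc$-valued cumulants as a standard semicircular free from $\Dc$, which determines all joint moments because $\tau=(\tau\restrict_{\Dc})\circ E_{\Dc}$. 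Your covariance computation checks out: the diagonal part of $E_{\Dc}(YdY)$ is $\sum_i\tau(dp_i)p_i$ (using $\tau(p_idp_k)=\delta_{ik}\tau(dp_i)$ and $p_ip_k=\delta_{ik}p_i$), the off-diagonal part is $\sum_{i}\bigl(\tau(d)-\tau(dp_i)\bigr)p_i=\tau(d)1-\sum_i\tau(dp_i)p_i$, and the cross terms vanish since $E_{\Dc}(Xd\tilde X)=\tau(d)\tau(X\tilde X)1=0$; note these terms \emph{sum} to $\tau(d)1$ (the $\sum_i\tau(dp_i)p_i$ pieces cancel), giving $\eta(1)=1$ and hence $\tau(Y^2)=1$. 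As for what each approach buys: the random-matrix proof is the one-citation argument the paper intends, at the cost of the approximation bookkeeping you acknowledge (modelling finitely many elements of $\Dc$ by diagonal matrices suffices, since freeness is checked on finite families); the operator-valued proof is limit-free and self-contained, and arguably better suited to this paper, whose Section~\ref{sec:Bcirc} already works with operator-valued cumulants over commutative algebras.
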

 
\begin{proof}[Proof of Theorem~\ref{thm:dt-ut}]
This imitates some of the proof of Theorem 4.12 in \cite{DH04a}, using Lemma \ref{lem:semic}.
Take a von Neumann algebra $\Mcal$ with a trace-preserving, normal, injective *-homomorphism, $\lambda:L^\infty([0,1])\to \Mcal$
and having
$X,\Xt \in \Mcal$, centered semi-circular elements such that $\tau(X^2)=\tau(\Xt^2)=1$ and
\[
\{ X\},\,\{\Xt\} ,\lambda(L^\infty([0,1]))
\]
is a free family.
Choose $0=s_0<s_1<\cdots s_{n}=1$ such that $s_{i}-s_{i-1}=t_i$, for $1\leq i \leq n$, and let $p_i = \lambda(1_{[s_{i-1},s_i]})$.
Then the $p_i$ are projections and $\tau(p_i)=t_i$.
From Lemma \ref{lem:semic}, 
\[ Y = \sum_{i=1}^n p_i \Xt p_i + \sum_{i \neq j} p_i X p_j \]
 is a semicircular element with $\tau(Y^2)=1$, and  $Y$ is free from $\lambda(L^\infty([0,1])$.
For $1\le i\le n$, let $\lambda_i:L^\infty([0,1])\to p_i\Mcal p_i$ be the unital $*$-homomorphism given by $\lambda_i(f)=\lambda(g)$ where
\[
g(x)=\begin{cases}
f((x-s_{i-1})/t_i),& s_{i-1}\le x\le s_i \\
0,&\text{otherwise.}
\end{cases}
\]
Let $f_i\in L^\infty([0,1])$ be such that the push-forward under $f_i$ of Lebesgue measure is $\mu_i$ and let $f\in L^\infty([0,1])$ be defined by
\[
\lambda(f)=\lambda_1(f_1)+\cdots+\lambda_n(f_n).
\]
Then the push-forward of Lebesgue measure under $f$ is $\mu$.
Let $T=\UT(Y,\lambda)$ be the operator from Theorem \ref{thm:d+t} corresponding to $Y$ and $\lambda$.
Let $Z=\lambda(f)+T$.
Then $Z$ is a $\DT(\mu,1)$-operator.
Let $\Nc$ be the von Neumann algebra generated by $\lambda(L^\infty([0,1]))$ and $\Xt$, so that $\Nc$ and $\{X\}$ are free.
Let  $a_i=p_iZp_i=\lambda_i(f_i)+p_iTp_i$ and, for $i<j$, let $b_{ij}=p_iTp_j$.
Note that $p_iTp_i=p_i\UT(\Xt,\lambda)p_i\in\Nc$ and $b_{ij}=p_iXp_j$.
By Lemma~4.10 of~\cite{DH04a},  $p_iTp_i=\UT(p_i\Xt p_i,\lambda_i)$,.
We have that $\lambda_i(L^\infty([0,1]))$ and $p_i\Xt p_i$ are free.
Moreover, $p_i\Xt p_i$ is with respect to the trace $\tau(p_i)^{-1}\tau\restrict_{pi\Ncal p_i}$ a
semicircular element with second moment $\tau(p_i)=t_i$.
It follows that $a_i$ is a $\DT(\mu_i,\sqrt{t_i})$-element in $p_i\Ncal p_i$ with respect to this trace.
Thus, we have
\[ D+c T = \sum_{i=1}^n a_i + \sum_{1<i<j<n} b_{ij} \]
as required.

For the last statement, since the Brown measure of $a_1$ is concentrated in $B$, it follows from Theorem~\ref{thm:hs-proj}(\ref{it:hs-invt})
that $p\le P(Z,B)$.
However, $\tau(p)=t_1=\mu(B)=\tau(P(Z,B))$, so we have equality.

\end{proof}
 
The following result is a consequence of the upper-triangular decomposition theorem and the properties of Haagerup--Schultz projections that are described in Theorem~\ref{thm:hs-proj}.
 
\begin{thm}\label{thm:dt-properties}
Let $T$ be a $\DT(\mu,c)$ operator. Then,
\begin{enumerate}[(i)]
\item The Brown measure of $T$ is $\mu$.
\item $T$ is a decomposable operator, and $\sigma(T)=\text{supp}(\mu)$.
\item If $B\subseteq \Cpx$ is a Borel set with $\mu(B)\neq 0$, and $p=P(T,B)$ is the corresponding Haagerup-Schultz projection, then $Tp$ is also a DT-operator. To be precise,
$Tp$ is a $\text{DT}(\mu(B)^{-1}\mu\restrict_B, c\sqrt{\mu(B)})$ operator in $p\Mcal p$.
\end{enumerate}
\end{thm}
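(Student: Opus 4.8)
The plan is to deduce all three parts from the concrete upper-triangular model of Theorem~\ref{thm:dt-ut} together with the fact that a $\DT(\mu,c)$-operator is determined up to $*$-distribution. Since $\tau$ is faithful, any two $\DT(\mu,c)$-operators $T$ and $Z$ generate von Neumann algebras related by a trace-preserving $*$-isomorphism carrying $T$ to $Z$; by the uniqueness clause of Theorem~\ref{thm:hs-proj} this isomorphism carries $P(T,B)$ to $P(Z,B)$, and it preserves Brown measures, spectra, and (after compressing and renormalizing) the property of being a DT-operator. Hence it suffices to verify each assertion for the single model $Z$ produced by Theorem~\ref{thm:dt-ut}.

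Part~(iii) is the heart of the matter, and here the model is tailored to the task. Assume first that $0<\mu(B)<1$, and write $\mu = \mu(B)\,\mu_1 + (1-\mu(B))\,\mu_2$ where $\mu_1 = \mu(B)^{-1}\mu\restrict_B$ is concentrated in $B$ and $\mu_2 = (1-\mu(B))^{-1}\mu\restrict_{B^c}$ is concentrated in $B^c$. Apply Theorem~\ref{thm:dt-ut} with $n=2$ and $t_1 = \mu(B)$. Its final clause yields $p_1 = P(Z,B)$, while part~(iii) of that theorem makes $a_1 \in p_1\Nc p_1$ a $\DT(\mu_1, c\sqrt{\mu(B)})$-operator with respect to $\mu(B)^{-1}\tau$, which is exactly the normalized trace on the corner $p_1\Mcal p_1$. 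Because the displayed matrix is upper-triangular one computes $Zp_1 = a_1$, so $Zp_1$ is the asserted DT-operator; transferring through the isomorphism of the previous paragraph gives the claim for the arbitrary operator $T$. The remaining case $\mu(B)=1$ is trivial: then $P(T,B)=1$ and $Tp=T$ is a $\DT(\mu,c)=\DT(\mu(B)^{-1}\mu\restrict_B, c\sqrt{\mu(B)})$-operator.

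For part~(i) I would use the multiplicativity of the Fuglede--Kadison determinant $\Delta_\tau$ over the triangular structure of $Z$: the strictly-upper entries $b_{ij}$ do not affect $\Delta_\tau(Z-\lambda)$, so $\log\Delta_\tau(Z-\lambda) = \sum_j t_j \log\Delta_{p_j\Mcal p_j}(a_j-\lambda p_j)$ and therefore $\mu_Z = \sum_j t_j\,\mu_{a_j}$. Taking each $\mu_j$ to be a point mass $\delta_{z_j}$ makes $a_j = z_j p_j + c\sqrt{t_j}\,T_j$ with $T_j$ the quasinilpotent $\UT$-operator of Theorem~\ref{thm:d+t}, whose Brown measure is $\delta_0$ by quasinilpotence; since Brown measure is translation-covariant, $\mu_{a_j}=\delta_{z_j}$ and $\mu_Z = \sum_j t_j\delta_{z_j}=\mu$ whenever $\mu$ is finitely supported. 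For general compactly supported $\mu$ the identity $\mu_T=\mu$ is the Brown-measure computation of~\cite{DH04a}, which may be cited (a direct limiting argument is delicate because Brown measure is not weak-$*$ continuous). Part~(ii) likewise rests on~\cite{DH04a}: $\supp\mu = \supp\mu_T\subseteq\sigma(T)$ always, the reverse inclusion $\sigma(T)\subseteq\supp\mu$ being the DT spectrum computation, and decomposability being established there as well.

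If one prefers a self-contained treatment of decomposability, I would argue from the Haagerup--Schultz calculus: given an open cover $U_1\cup\cdots\cup U_k\supseteq\sigma(T)$, choose closed $F_j\subseteq U_j$ with $\bigcup_j F_j\supseteq\supp\mu$ and set $Y_j = P(T,F_j)\HEu$. Each $Y_j$ is $T$-invariant, and by part~(iii) the compression $TP(T,F_j)$ is a DT-operator with Brown measure supported in $F_j$, so by the spectrum statement $\sigma(T\restrict_{Y_j})\subseteq F_j\subseteq U_j$; Proposition~\ref{prop:hsproj-lattice} gives $\bigvee_j P(T,F_j)=1$, so $\sum_j Y_j$ is dense. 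The main obstacle is exactly the promotion of this density to the genuine spanning $\sum_j Y_j=\HEu$ demanded by decomposability. This cannot be had from complementary projections, since a zero angle between $P(T,B)$ and $P(T,B^c)$ is precisely the phenomenon this paper exhibits; it must instead be extracted from overlapping sets $F_j$, whose intersections carry the nonzero projections $P(T,F_i\cap F_j)$ that keep the relevant Friedrichs angles positive. Verifying this positivity is the delicate step, and it is cleanest to invoke the decomposability of DT-operators from~\cite{DH04a}.
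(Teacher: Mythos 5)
Your proposal is correct and follows the paper's intended route: the theorem is stated there without a separate proof, precisely as a consequence of Theorem~\ref{thm:dt-ut} (whose final clause, applied with $n=2$, $t_1=\mu(B)$, and the normalized restrictions $\mu_1=\mu(B)^{-1}\mu\restrict_B$, $\mu_2=(1-\mu(B))^{-1}\mu\restrict_{B^c}$, yields exactly your argument for part~(iii), including $Zp_1=a_1$) together with the Brown-measure, spectrum, and decomposability results of~\cite{DH04a}, which you likewise invoke for parts~(i) and~(ii). Your additional material --- the transfer of Haagerup--Schultz projections along the trace-preserving $*$-isomorphism via the uniqueness clause of Theorem~\ref{thm:hs-proj}, the Fuglede--Kadison determinant computation for atomic $\mu$, and the honest identification of where a self-contained decomposability argument would stall (promoting density of $\sum_j Y_j$ to genuine spanning) --- is a correct fleshing-out of steps the paper leaves implicit rather than a different method.
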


\section{$B$-valued circular operators where $B$ is commutative}
\label{sec:Bcirc}

Let $B$ be a unital C$^*$-algebra.
A $B$-valued C$^*$-noncommutative probability space is a pair $(A,E)$, where $A$ is a unital C$^*$-algebra containing a unital copy of $B$ and where $E:A\to B$
is a conditional expectation.
Given $a\in A$, the {\em $B$-valued $*$-moments} of $a$ are the maps that catalog all of the information about the values of expectations
of the form $E(a^{\eps(1)}b_1a^{\eps(2)}\cdots b_{n-1}a^{\eps(n)})$,
for $n\in\Nats$, $\eps(1),\ldots,\eps(n)\in\{1,*\}$ and $b_1,\ldots,b_{n-1}\in B$.
Speicher~\cite{Sp98} developed the theory of {\em $B$-valued cumulants} in terms of which the $B$-valued $*$-moments of elements of $A$ can be written.
The notion of a {\em $B$-valued circular operator} was introduced in~\cite{SS01}.
It is an element $a\in A$ such that the $B$-valued cumulants of $(a,a^*)$ vanish except for the balanced ones of order $2$.
(See~\cite{SS01} or~\cite{BD18} for details).

This section is directed toward making some norm estimates involving $B$-valued circular operators where $B$ is commutative.
It ends with an application to $\DT$-operators.

It is important to note that the quasinilpotent DT-operator $T$ is $B$-valued circular for $B=C([0,1])$.
In~\cite{S03}, Piotr {\'S}niady proved some important results about $T$,
or, more correctly, about a certain $B$-valued circular operator for $B=L^\infty([0,1])$ (or, equally well, for $B=C([0,1])$).
(A full proof that $T$ is $B$-valued circular is found in the appendix of~\cite{DH04b}.)
Here, $B$ is identified with $\lambda(C([0,1]))$ via $\lambda$ and we
consider the $B$-valued C$^*$-noncommutative probability space with $\tau$-preserving conditional expectation from the C$^*$-algebra generated by $X$ and $B$ onto $B$.

Throughout this section $B$ will be a commutative, unital C$^*$-algebra and $(A,E)$ a $B$-valued C$^*$-noncommutative probability space.
A $B$-valued circular element is $T\in A$ with $E(T)=0$
and such that the only nonvanishing noncrossing cumulants of the pair $(T_1,T_2)=(T,T^*)$ are the completely positive maps
from $B$ to $B$, given by
$\alpha_{1,2}(b)=E(TbT^*)$ and $\alpha_{2,1}(b)=E(T^*bT)$.
These are positive maps from $B$ to $B$ and must satisfy
\begin{equation}\label{eq:|b|}
|\alpha_{1,2}(b)|\le\alpha_{1,2}(|b|),\qquad|\alpha_{2,1}(b)|\le\alpha_{2,1}(|b|).
\end{equation}
Indeed, if $\phi$ is a state of $B$, then $\phi\circ\alpha_{1,2}$ is a positive linear functional on $B$ and is given by integration with respect to a measure on the spectrum of $B$,
so satisfies
\[
|\phi\circ\alpha_{1,2}(b)|\le\phi\circ\alpha_{1,2}(|b|).
\]

By way of notation, given a finite sequence $\eps(1),\eps(2),\ldots,\eps(n)\in\{1,*\}$, we will say the sequence is {\em balanced} if 
there are as many $*$'s as $1$'s, namely, if
\[
\#\{j: \eps(j)=1\}=\frac n2.
\]
\begin{lemma}\label{lem:EXeps-pos}
Let $B$ be a commutative, unital C$^*$-algebra
and let $T$ be a $B$-valued circular element in some $B$-valued C$^*$-noncommutative probability space $(A,E)$.
Then for all $\eps(1),\ldots,\eps(n)\in\{1,*\}$, we have
\begin{equation}\label{eq:EXeps-pos}
E(T^{\eps(1)}T^{\eps(2)}\cdots T^{\eps(n)})\ge0.
\end{equation}
\end{lemma}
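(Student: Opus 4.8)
The plan is to invoke the operator-valued moment--cumulant formula and reduce everything to the commutativity of $B$. Since $T$ is $B$-valued circular, the only nonvanishing cumulants of the pair $(T,T^*)$ are the order-two maps $\alpha_{1,2}(b)=E(TbT^*)$ and $\alpha_{2,1}(b)=E(T^*bT)$. The moment--cumulant formula therefore gives
\[
E(T^{\eps(1)}T^{\eps(2)}\cdots T^{\eps(n)})=\sum_{\pi}\kappa_\pi,
\]
where $\pi$ ranges over the noncrossing pair partitions of $\{1,\dots,n\}$ whose every block $\{i,j\}$ (with $i<j$) satisfies $\{\eps(i),\eps(j)\}=\{1,*\}$ (so that it corresponds to one of the surviving order-two cumulants), and $\kappa_\pi\in B$ is the associated nested evaluation of $\alpha_{1,2}$ and $\alpha_{2,1}$. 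In particular, if $n$ is odd or the sequence $\eps$ is not balanced, the index set is empty and the moment is $0$; so I may assume $n$ is even and $\eps$ balanced, and it suffices to show that each individual term $\kappa_\pi$ is positive.

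I would prove $\kappa_\pi\ge0$ by induction on $n$ via the standard ``first block'' recursion. Let $\{1,k\}$ be the block of $\pi$ containing the index $1$. Because $\pi$ is noncrossing, it restricts to a pair partition $\pi'$ of the inner indices $\{2,\dots,k-1\}$ and to a pair partition $\pi''$ of the trailing indices $\{k+1,\dots,n\}$, and $B$-multilinearity of the cumulants yields
\[
\kappa_\pi=\alpha_{\eps(1)\eps(k)}\bigl(\kappa_{\pi'}\bigr)\,\kappa_{\pi''},
\]
where $\alpha_{\eps(1)\eps(k)}$ denotes $\alpha_{1,2}$ when $(\eps(1),\eps(k))=(1,*)$ and $\alpha_{2,1}$ when $(\eps(1),\eps(k))=(*,1)$, and the product is taken in $B$ (the empty partition contributing the unit $1\in B$). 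By the inductive hypothesis $\kappa_{\pi'}\ge0$ and $\kappa_{\pi''}\ge0$. Since $\alpha_{1,2}$ and $\alpha_{2,1}$ are positive maps, $\alpha_{\eps(1)\eps(k)}(\kappa_{\pi'})\ge0$; and here is the essential point, namely that in the \emph{commutative} C$^*$-algebra $B$ the product of the two positive elements $\alpha_{\eps(1)\eps(k)}(\kappa_{\pi'})$ and $\kappa_{\pi''}$ is again positive. Hence $\kappa_\pi\ge0$, and summing over $\pi$ gives the claim.

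I expect the main obstacle to be purely a matter of setting up the combinatorics correctly rather than any analytic difficulty: one must confirm that only the balanced order-two cumulants survive for a circular element, that the nested value $\kappa_\pi$ genuinely factors through the first-block recursion with the inner value $\kappa_{\pi'}$ threaded in as the single $B$-valued argument of the outer cumulant, and that the recursion takes the one-sided form ``$\alpha_{\eps(1)\eps(k)}(\kappa_{\pi'})$ times $\kappa_{\pi''}$'' displayed above. Once this bookkeeping is in place, positivity is immediate, and it is worth emphasizing that commutativity of $B$ is used in an essential and non-removable way: for a noncommutative coefficient algebra the product of positive elements need not be positive, so this argument would fail.
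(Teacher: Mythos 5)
Your proposal is correct and is essentially the paper's argument: the paper likewise reduces to balanced words and inducts on $n$ via the first-block moment--cumulant recursion
$E(T^{\eps(1)}\cdots T^{\eps(n)})=\sum_{j\in J}\alpha_{1,2}\big(E(T^{\eps(2)}\cdots T^{\eps(j-1)})\big)E(T^{\eps(j+1)}\cdots T^{\eps(n)})$
(with $\alpha_{2,1}$ when $\eps(1)=*$), using exactly your three ingredients: positivity of $\alpha_{1,2}$ and $\alpha_{2,1}$, and commutativity of $B$ to conclude that the product of the two positive factors is positive. The only cosmetic difference is granularity: you prove the stronger termwise statement $\kappa_\pi\ge0$ for each noncrossing pairing, while the paper keeps the inner and trailing parts summed as full moments, so its induction hypothesis is the lemma itself and no per-partition bookkeeping is needed.
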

\begin{proof}
If the sequence $\eps(1),\ldots,\eps(n)$ is not balanced, then from the moment-cumulant formula, the expectation on the left hand side of~\eqref{eq:EXeps-pos}
is zero.
So we may assume the sequence is balanced.
Let $\alpha_{1,2}$ and $\alpha_{2,1}$ be the cumulant maps for $T,T^*$.
We proceed by induction on $n$.
The case $n=2$ follows by positivity of the maps $\alpha_{1,2}$ and $\alpha_{2,1}$.

For the induction step, suppose $n\ge4$.
Let $J$ be the set of all $j\in\{2,3,\ldots,n\}$ such that $\eps(j)\ne\eps(1)$ and the sequence $\eps(1),\ldots,\eps(j)$ is balanced.
Suppose for the moment $\eps(1)=1$.
Then by the moment-cumulant formula, we have
\begin{equation}\label{eq:EXeps-sum}
E(T^{\eps(1)}\cdots T^{\eps(n)})=\sum_{j\in J}\alpha_{1,2}\big(E(T^{\eps(2)}\cdots T^{\eps(j-1)})\big)E(T^{\eps(j+1)}\cdots T^{\eps(n)}).
\end{equation}
Using the induction hypothesis, the positivity
of the map $\alpha_{1,2}$ and the commutativity of $B$, we see that for each $j\in J$ the corresponding term in the sum~\eqref{eq:EXeps-sum}
is positive and, hence, we get $E(T^{\eps(1)}T^{\eps(2)}\cdots T^{\eps(n)})\ge0$.

The situation when $\eps(1)=*$ is the same but with $\alpha_{2,1}$ replacing $\alpha_{1,2}$ in~\eqref{eq:EXeps-sum}.
\end{proof}

\begin{lemma}\label{lem:|b|}
Let $B$ be a commutative, unital C$^*$-algebra and suppose $T$ is a $B$-valued circular element in a C$^*$-noncommutative probability space $(A,E)$.
Then for every $n\in\Nats$, $\eps(1),\ldots,\eps(n)\in\{1,*\}$ and $b_1,\ldots,b_n\in B$, we have
\begin{equation}\label{eq:|E|bdd}
\big|E(T^{\eps(1)}b_1T^{\eps(2)}b_2\cdots T^{\eps(n)}b_n)\big|\le\left(\prod_{j=1}^n\|b_j\|\right)E(T^{\eps(1)}T^{\eps(2)}\cdots T^{\eps(n)}).
\end{equation}
\end{lemma}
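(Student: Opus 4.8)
The plan is to prove~\eqref{eq:|E|bdd} by induction on $n$, running the same recursion that drives the proof of Lemma~\ref{lem:EXeps-pos} but now carrying the module elements $b_1,\dots,b_n$ along. As there, if $\eps(1),\dots,\eps(n)$ is not balanced both sides vanish (the left by the moment-cumulant formula, the right by~\eqref{eq:EXeps-pos}), so I may assume the sequence is balanced; the case $n=2$ is immediate, since $E(Tb_1T^*b_2)=\alpha_{1,2}(b_1)b_2$ and, using commutativity, \eqref{eq:|b|}, and positivity, $|\alpha_{1,2}(b_1)b_2|=|\alpha_{1,2}(b_1)|\,|b_2|\le\alpha_{1,2}(|b_1|)\|b_2\|\le\|b_1\|\|b_2\|\alpha_{1,2}(1)=\|b_1\|\|b_2\|E(TT^*)$.

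For the inductive step let $J$ be, as in Lemma~\ref{lem:EXeps-pos}, the set of $j\in\{2,\dots,n\}$ with $\eps(j)\neq\eps(1)$ and $\eps(1),\dots,\eps(j)$ balanced. Since only the order-two cumulants $\alpha_{1,2},\alpha_{2,1}$ survive, the moment-cumulant formula applied to the word $T^{\eps(1)}b_1\cdots T^{\eps(n)}b_n$, together with the left $B$-linearity of $E$, gives
\begin{equation*}
E(T^{\eps(1)}b_1\cdots T^{\eps(n)}b_n)=\sum_{j\in J}\alpha_{\eps(1),\eps(j)}\big(b_1\,M_{\mathrm{in}}(j)\big)\,b_j\,M_{\mathrm{out}}(j),
\end{equation*}
where $M_{\mathrm{in}}(j)=E(T^{\eps(2)}b_2\cdots T^{\eps(j-1)}b_{j-1})$ and $M_{\mathrm{out}}(j)=E(T^{\eps(j+1)}b_{j+1}\cdots T^{\eps(n)}b_n)$ are moments of strictly shorter words of the same type, and $\alpha_{\eps(1),\eps(j)}$ denotes $\alpha_{1,2}$ when $\eps(1)=1$ and $\alpha_{2,1}$ when $\eps(1)=*$. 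Here $T^{\eps(1)}$ is paired with $T^{\eps(j)}$, the associated cumulant absorbs the $B$-valued expectation of everything strictly between them, and the stray $b_1$ factors out of that inner expectation by left $B$-linearity.

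To estimate, I take absolute values pointwise on the spectrum of $B$. Using the triangle inequality, the multiplicativity $|xy|=|x|\,|y|$ (valid because $B$ is commutative), the bound~\eqref{eq:|b|}, and $|b|\le\|b\|1$, each summand is dominated by $\alpha_{\eps(1),\eps(j)}\big(|b_1|\,|M_{\mathrm{in}}(j)|\big)\,\|b_j\|\,|M_{\mathrm{out}}(j)|$. The induction hypothesis bounds $|M_{\mathrm{in}}(j)|$ and $|M_{\mathrm{out}}(j)|$ by the corresponding products $\prod\|b_i\|$ times the $b$-free moments $E(T^{\eps(2)}\cdots T^{\eps(j-1)})$ and $E(T^{\eps(j+1)}\cdots T^{\eps(n)})$, which are positive by Lemma~\ref{lem:EXeps-pos}. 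Since $\alpha_{\eps(1),\eps(j)}$ is a positive, hence monotone and positively homogeneous, linear map, I may pull all the scalar factors $\|b_i\|$ out front and replace each inner $|M|$ by its positive $b$-free counterpart. Summing over $j\in J$ and invoking the $b$-free recursion~\eqref{eq:EXeps-sum} (and its $\eps(1)=*$ analogue) reassembles exactly $\big(\prod_{j=1}^n\|b_j\|\big)E(T^{\eps(1)}\cdots T^{\eps(n)})$, which is~\eqref{eq:|E|bdd}.

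The substantive point, and the only place where commutativity of $B$ is essential, is the passage $|xy|=|x|\,|y|$ together with the pointwise triangle inequality $|\sum_j x_j|\le\sum_j|x_j|$ in the order of $B$ and the inequality~\eqref{eq:|b|}; without commutativity none of these order relations is available. The remaining difficulty is purely organizational: keeping the indices of the $M_{\mathrm{in}}(j)$, the $M_{\mathrm{out}}(j)$, and the stray factor $b_1$ straight, so that the collected product of norms is precisely $\prod_{j=1}^n\|b_j\|$ and the $b$-free terms recombine through~\eqref{eq:EXeps-sum}. I expect this to be routine once the recursion above is written out carefully.
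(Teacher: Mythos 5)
Your proof is correct and takes essentially the same approach as the paper's: the same induction on balanced words, the same recursion over the set $J$ from Lemma~\ref{lem:EXeps-pos} via the moment-cumulant formula with $b_1$ absorbed into the cumulant argument, and the same combination of~\eqref{eq:|b|}, commutativity of $B$, monotonicity of the positive maps, and the $b$-free recursion~\eqref{eq:EXeps-sum} to reassemble the bound. The only cosmetic differences are your unified notation $\alpha_{\eps(1),\eps(j)}$ for the two cases $\eps(1)=1$ and $\eps(1)=*$ that the paper treats sequentially, and your base case written out for one ordering only (the other being symmetric).
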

\begin{proof}
Let $\alpha_{1,2}$ and $\alpha_{2,1}$ be the cumulant maps for the pair $(T_1,T_2)=(T,T^*)$, given by
\[
\alpha_{1,2}(b)=E(TbT^*),
\qquad
\alpha_{2,1}(b)=E(T^*bT).
\]
Note that, if the sequence $\eps(1),\ldots,\eps(n)$ is not balanced, then both sides of~\eqref{eq:|E|bdd} are zero, so we assume the sequence is balanced
and we proceed to prove~\eqref{eq:|E|bdd} by induction on $n$.
For the case $n=2$,
using~\eqref{eq:|b|} we get 
\begin{multline*}
|E(T^{\eps(1)}b_1T^{\eps(2)}b_2)|=|E(T^{\eps(1)}b_1T^{\eps(2)})|\,|b_2| \\
\le E(T^{\eps(1)}|b_1|T^{\eps(2)})\,|b_2|\le\|b_1\|\,\|b_2\|\,E(T^{\eps(1)}T^{\eps(2)}).
\end{multline*}
For the induction step, let $n\ge4$ and suppose $\eps(1)=1$.
Letting $J$ be as in the proof of Lemma~\ref{lem:EXeps-pos}, using the moment-cumulant formula for $T$,
equation~\eqref{eq:|b|} and the induction hypothesis,
we have
\begin{align*}
&\big|E(T^{\eps(1)}b_1T^{\eps(2)}b_2\cdots T^{\eps(n)}b_n)\big| \\
&=\left|\sum_{j\in J}\alpha_{1,2}\big(b_1E(T^{\eps(2)}b_2\cdots T^{\eps(j-1)}b_{j-1})\big)b_jE(T^{\eps(j+1)}b_{j+1}\cdots T^{\eps(n)}b_n)\right| \displaybreak[1] \\[1ex]
&\le\sum_{j\in J}\big|\alpha_{1,2}\big(b_1E(T^{\eps(2)}b_2\cdots T^{\eps(j-1)}b_{j-1})\big)\big|\,\|b_j\|\,\big|E(T^{\eps(j+1)}b_{j+1}\cdots T^{\eps(n)}b_n)\big|
 \displaybreak[1] \\
&\le\sum_{j\in J}\alpha_{1,2}\big(\big|b_1E(T^{\eps(2)}b_2\cdots T^{\eps(j-1)}b_{j-1})\big|\big)\,\left(\prod_{i=j}^n\|b_i\|\right)E(T^{\eps(j+1)}\cdots T^{\eps(n)})
 \displaybreak[2] \\
&\le\sum_{j\in J}\alpha_{1,2}\big(\|b_1\|\big|E(T^{\eps(2)}b_2\cdots T^{\eps(j-1)}b_{j-1})\big|\big)\,\left(\prod_{i=j}^n\|b_i\|\right)E(T^{\eps(j+1)}\cdots T^{\eps(n)}) 
 \displaybreak[1] \\
&\le\sum_{j\in J}\alpha_{1,2}\big(\left(\prod_{i=1}^{j-1}\|b_i\|\right)E(T^{\eps(2)}\cdots T^{\eps(j-1)})\big)\,\left(\prod_{i=j}^n\|b_i\|\right)E(T^{\eps(j+1)}\cdots T^{\eps(n)})
 \displaybreak[1] \\
&=\left(\prod_{i=1}^{n}\|b_i\|\right)\sum_{j\in J}\alpha_{1,2}\big(E(T^{\eps(2)}\cdots T^{\eps(j-1)})\big)\,E(T^{\eps(j+1)}\cdots T^{\eps(n)}) \\
&=\left(\prod_{i=1}^{n}\|b_i\|\right)E(T^{\eps(1)}\cdots T^{\eps(n)}).
\end{align*}
When $\eps(1)=*$, the proof is the same but with $\alpha_{2,1}$ replacing $\alpha_{1,2}$.
\end{proof}

\begin{lemma}\label{lem:norms}
Let $B$ be a commutative, unital C$^*$-algebra and suppose $T$ is a $B$-valued circular element in a C$^*$-noncommutative probability space $(A,E)$
with $E$ faithful.
Then for every $n\in\Nats$and $\eps(1),\ldots,\eps(n)\in\{1,*\}$ and all $b_1,\ldots,b_n\in B$, we have
\[
\|T^{\eps(1)}b_1T^{\eps(2)}b_2\cdots T^{\eps(n)}b_n\|\le\left(\prod_{j=1}^n\|b_j\|\right)\|T^{\eps(1)}T^{\eps(2)}\cdots T^{\eps(n)}\|.
\]
In particular, if $T$ is quasinilpotent and $b_1,b_2\in B$, then $b_1Tb_2$ is quasinilpotent.
\end{lemma}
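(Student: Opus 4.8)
The plan is to reduce the norm estimate to the conditional-expectation estimate of Lemma~\ref{lem:|b|}, via a general principle expressing the norm of a positive element as a growth rate of the norms of its conditional expectations. Concretely, I would first isolate the following auxiliary fact: \emph{if $E\colon A\to B$ is a faithful conditional expectation and $a\in A$ is positive, then}
\[
\norm{a}=\lim_{m\to\infty}\norm{E(a^m)}^{1/m}.
\]
Granting this, I set $w=T^{\eps(1)}b_1\cdots T^{\eps(n)}b_n$ and $v=T^{\eps(1)}\cdots T^{\eps(n)}$, so that $\norm{w}^2=\norm{ww^*}$ and $\norm{v}^2=\norm{vv^*}$, and apply the principle to the positive elements $ww^*$ and $vv^*$.

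The hard part will be the auxiliary fact, and it is exactly where faithfulness of $E$ enters. The inequality $\norm{E(a^m)}^{1/m}\le\norm{a}$ is immediate from $\norm{E(a^m)}\le\norm{a^m}=\norm{a}^m$. For the reverse inequality I would fix $\delta>0$ and use continuous functional calculus to produce $g=f(a)\ge0$ with $0\le f\le1$, with $f$ supported in $[\norm{a}-\delta,\norm{a}]$ and $f(\norm{a})=1$; then $g\ne0$ (since $\norm{a}\in\sigma(a)$), so faithfulness of $E$ forces $E(g)\ne0$ and hence provides a state $\phi$ on $B$ with $\phi(E(g))>0$. Since $t^m\ge(\norm{a}-\delta)^m f(t)$ for all $t\in\sigma(a)$, functional calculus yields $a^m\ge(\norm{a}-\delta)^m g$, whence $\phi(E(a^m))\ge(\norm{a}-\delta)^m\phi(E(g))$; taking $m$-th roots and letting $m\to\infty$ gives $\liminf_m\norm{E(a^m)}^{1/m}\ge\norm{a}-\delta$, and $\delta$ is arbitrary.

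With the auxiliary fact in hand, I would compute $ww^*$ explicitly. Written out, $ww^*$ is a word in $T,T^*$ beginning with the factor $T^{\eps(1)}$ and ending with $T^{\eps(1)*}$, whose interspersed coefficients are $b_1,\dots,b_{n-1}$, a middle coefficient $b_nb_n^*$, and then $b_{n-1}^*,\dots,b_1^*$; deleting all coefficients recovers $vv^*$. Raising to the $m$-th power and inserting factors $1$ at the junctions (which changes neither the form of the word nor the product of the coefficient norms), Lemma~\ref{lem:|b|} applies to the balanced word $(ww^*)^m$ and gives
\[
\big|E\big((ww^*)^m\big)\big|\le\Big(\prod_{j=1}^n\norm{b_j}\Big)^{2m}E\big((vv^*)^m\big),
\]
using $\norm{b_nb_n^*}=\norm{b_n}^2$. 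Since $(ww^*)^m$ is balanced, Lemma~\ref{lem:EXeps-pos} gives $E((ww^*)^m)\ge0$, so the modulus is superfluous and we may take norms, which are monotone on the positive cone of $B$; taking $m$-th roots and applying the auxiliary fact to $ww^*$ and $vv^*$ yields $\norm{w}^2\le(\prod_j\norm{b_j})^2\norm{v}^2$, which is the claimed estimate.

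Finally, for the ``in particular'' statement I would observe that $(b_1Tb_2)^k=b_1\,T(b_2b_1)T(b_2b_1)\cdots T\,b_2$, so that, factoring out the leading $b_1$ and applying the main estimate to the word $T(b_2b_1)T\cdots T b_2$ (all exponents equal to $1$),
\[
\norm{(b_1Tb_2)^k}\le\norm{b_1}\,\norm{b_2}\,\norm{b_2b_1}^{k-1}\,\norm{T^k}.
\]
Taking $k$-th roots and letting $k\to\infty$, the constants contribute a factor tending to $1$ while $\norm{T^k}^{1/k}\to0$ because $T$ is quasinilpotent; hence the spectral radius of $b_1Tb_2$ is $0$, so $b_1Tb_2$ is quasinilpotent.
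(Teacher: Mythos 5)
Your proof is correct and follows the same overall strategy as the paper's: reduce the norm bound to the expectation bound of Lemma~\ref{lem:|b|} via the principle that the norm of an element is the growth rate of $\norm{E((x^*x)^m)}^{1/m}$ (you work with $ww^*$ where the paper uses $W^*W$, which is immaterial). The genuine difference lies in how that principle is justified. The paper first reduces, without loss of generality, to countably generated (hence separable) $A$ and $B$, takes a faithful state $\phi$ on $B$, and invokes the standard formula $\norm{x}=\limsup_m\big(\phi\circ E((x^*x)^m)\big)^{1/2m}$ for the faithful state $\phi\circ E$ on $A$, sandwiching $\phi\circ E((x^*x)^m)\le\norm{E((x^*x)^m)}\le\norm{x}^{2m}$. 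You instead prove the needed formula $\norm{a}=\lim_m\norm{E(a^m)}^{1/m}$ for positive $a$ directly: a spectral cutoff $g=f(a)\ne0$ supported near $\norm{a}$, faithfulness of $E$ to get $E(g)\ne0$, a state $\phi$ on $B$ with $\phi(E(g))>0$, and the estimate $a^m\ge(\norm{a}-\delta)^m g$. This is self-contained and bypasses the separability reduction entirely, since faithfulness of $E$ is only needed on the single element $g$; what the paper's route buys is brevity, at the cost of quoting a fact about faithful states whose proof is essentially your functional-calculus argument. Two small blemishes, neither a gap: your citation of Lemma~\ref{lem:EXeps-pos} for $E((ww^*)^m)\ge0$ is misplaced, as that lemma concerns coefficient-free words, but the positivity is immediate since $(ww^*)^m\ge0$ in $A$ and $E$ is positive (and the modulus could be kept anyway, since $\norm{b}=\norm{\,|b|\,}$ and the norm is monotone on positive elements of $B$). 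And in the final step the constants do not all tend to $1$: $\norm{b_2b_1}^{(k-1)/k}\to\norm{b_2b_1}$; the conclusion nevertheless stands because this bounded factor is multiplied by $\norm{T^k}^{1/k}\to0$. (The paper leaves the ``in particular'' clause unproved; your factorization $(b_1Tb_2)^k=b_1\,T(b_2b_1)\cdots T\,b_2$ is the intended argument.)
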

\begin{proof}
We may without loss of generality assume that $A$ and $B$ are countably generated, and, thus, separable.
Let 
\[
W=T^{\eps(1)}b_1T^{\eps(2)}b_2\cdots T^{\eps(n)}b_n,\qquad V=T^{\eps(1)}T^{\eps(2)}\cdots T^{\eps(n)}
\]
and let $m=\left(\prod_{j=1}^n\|b_j\|\right)$.
For any element $x\in A$, we have
\[
\|x\|=\limsup_{n\to\infty}\|E((x^*x)^n)\|^{1/2n}.
\]
Indeed, if $\phi$ is a faithful state on $B$, then $\phi\circ E$ is a faithful state on $A$, and we have
\[
\|x\|=\limsup_{n\to\infty}\big(\phi\circ E((x^*x)^n)\big)^{1/2n}.
\]
But
\[
\phi\circ E((x^*x)^n)\le \|E((x^*x)^n)\| \le \|x\|^{2n}.
\]

Applying Lemma~\ref{lem:|b|}, for every $n\in\Nats$ we get
\[
E((W^*W)^n)\le m^{2n}E((V^*V)^n)
\]
and we conclude $\|W\|\le m\|V\|$.
\end{proof}

\begin{lemma}\label{lem:Z2norm}
Let $B$ be a commutative, unital C$^*$-algebra and suppose $T$ is a quasinilpotent $B$-valued circular element.

Take $Z=b+T$, where $b\in B$ is invertible.
Then $Z$ is invertible and for all $n\in\Nats$, we have
\[
 E( (Z^{n})^* Z^n ) \ge (b^{n})^* b^n 
\] 
\[
E((Z^{-n})^*Z^{-n})\ge(b^{-n})^*b^{-n}.
\]
\end{lemma}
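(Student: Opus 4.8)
The plan is to exploit two structural features of the setup: the conditional expectation $E$ is positive, and, because $T$ is $B$-valued circular, $E$ annihilates any word built from the letters $T,T^*$ and coefficients in $B$ that is not \emph{balanced}, i.e. in which the number of factors equal to $T$ differs from the number equal to $T^*$. This vanishing is exactly the moment--cumulant observation already used at the start of the proof of Lemma~\ref{lem:EXeps-pos} (every nonzero cumulant pairs one $T$ with one $T^*$, so an unbalanced word admits no noncrossing pairing).

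First I would settle invertibility. Writing $Z=b(1+b^{-1}T)$, the last assertion of Lemma~\ref{lem:norms} shows $b^{-1}T$ is quasinilpotent, so $\sigma(1+b^{-1}T)=\{1\}$ and $1+b^{-1}T$ is invertible; hence $Z$ is invertible, with $Z^{-1}=(1+b^{-1}T)^{-1}b^{-1}=\sum_{j\ge0}(-1)^j(b^{-1}T)^jb^{-1}$, the series converging in norm since quasinilpotence gives $\norm{(b^{-1}T)^j}^{1/j}\to0$.

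For the estimate on $Z^n$, note that $Z=b+T$ involves only $T$ (never $T^*$). Expanding, $Z^n=\sum_{k=0}^n Y_k$, where $Y_k$ is the sum of the $\binom nk$ words containing exactly $k$ factors $T$ and $n-k$ factors $b$; in particular $Y_0=b^n$. Then $(Z^n)^*Z^n=\sum_{k,l}Y_k^*Y_l$, and each word occurring in $Y_k^*Y_l$ has exactly $k$ factors $T^*$ (from $Y_k^*$) and $l$ factors $T$ (from $Y_l$), with all other factors in $B$; by the balance vanishing, $E(Y_k^*Y_l)=0$ unless $k=l$. Thus $E((Z^n)^*Z^n)=\sum_{k=0}^nE(Y_k^*Y_k)$, every summand is $\ge0$ by positivity of $E$, and the $k=0$ term is $E((b^n)^*b^n)=(b^n)^*b^n$; dropping the nonnegative terms with $k\ge1$ yields $E((Z^n)^*Z^n)\ge(b^n)^*b^n$.

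The inequality for $Z^{-n}$ runs on the identical grading-plus-positivity principle, the only new ingredient being convergence. Multiplying $n$ copies of the Neumann series and grouping by total $T$-degree $k$ gives $Z^{-n}=\sum_{k\ge0}W_k$ with $W_0=b^{-n}$ and $W_k$ a sum of $\binom{n+k-1}{n-1}$ words, each a product of $k$ factors $T$ and $n+k$ factors $b^{-1}$. By Lemma~\ref{lem:norms} each such word has norm at most $\norm{b^{-1}}^{n+k}\norm{T^k}$, so $\norm{W_k}\le\binom{n+k-1}{n-1}\norm{b^{-1}}^{n+k}\norm{T^k}$; since $\norm{T^k}^{1/k}\to0$, the root test gives $\sum_k\norm{W_k}<\infty$. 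Hence the series converges absolutely, $E$ may be applied termwise, and, exactly as before, $W_k^*W_l$ has $k$ factors $T^*$ and $l$ factors $T$, so $E(W_k^*W_l)=0$ for $k\ne l$. Therefore $E((Z^{-n})^*Z^{-n})=\sum_{k\ge0}E(W_k^*W_k)\ge E(W_0^*W_0)=(b^{-n})^*b^{-n}$. The one genuinely delicate point is this last paragraph's absolute convergence and the attendant interchange of $E$ with the infinite sum; once Lemma~\ref{lem:norms} and quasinilpotence are invoked it is routine, and the algebraic heart of both inequalities is simply that $E$ kills the off-diagonal cross terms, leaving a sum of positive diagonal contributions dominated below by the zero-th one.
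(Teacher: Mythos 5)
Your proof is correct and follows essentially the same route as the paper's: expand $Z^{\pm n}$, use that circularity forces $E$ to annihilate unbalanced words in $T,T^*$ with $B$-coefficients so the cross terms against $b^{\pm n}$ vanish, invoke positivity of $E$, and get invertibility from quasinilpotence of $b^{-1}T$ (Lemma~\ref{lem:norms}) and the norm-convergent Neumann series. The only difference is bookkeeping: you grade by total $T$-degree and kill every off-diagonal term $E(W_k^*W_l)$, which obliges you to justify the rearrangement by your root-test estimate, whereas the paper lumps everything except $b^{\pm n}$ into a single remainder $Y_n$ satisfying $E(bY_n)=0$ and applies positivity just once to $E(Y_n^*Y_n)$.
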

\begin{proof}

To prove the first inequality, we first expand $Z^n$ as follows: 

\[ 
Z^n = b^n + \sum_{p=1}^n \sum_{\substack{q_0,\ldots,q_p\ge0\\q_0+\cdots+q_p=n-p}}b^{q_0} T b^{q_1}\cdots T b^{q_p}.
\]
Since $T$ is $B$-circular,
\[ 
E(b_0 T b_1 T b_2 \cdots T b_p) = 0
\]
for all $p \in \Nats$ and $b_0,...b_p \in B$. 
If we let $Y_n = Z^n - b^n $, we have
$E(b_0 Y_n) = 0$, for every $b_0 \in B$. 
We then have  
\begin{align*} 
E( (Z^n)^* Z^n )&= E( (b^n)^*b^n) + E( (b^n)^* Y_n) + E(Y_n^*b^n) + E(Y_n^*Y_n) \\
&= (b^n)^* b^n + E(Y_n^*Y_n) \geq (b^n)^* b^n.
\end{align*}

The proof of the second inequality is largely along the same lines, using the power series expansion for $Z^{-1}$. 
We have
$Z=b(1+b^{-1}T)$.
By Lemma~\ref{lem:norms}, $b^{-1}T$ is quasinilpotent.
Therefore, $Z$ is invertible and its inverse has the power series expansion
\[
Z^{-1}=(1+b^{-1}T)^{-1}b^{-1}=b^{-1}+\sum_{k=1}^\infty(-1)^k(b^{-1}T)^kb^{-1},
\]
which converges in norm.
Let $Y_1=Z^{-1}-b^{-1}$ be the summation found above.
By using the power series expansion for $Y_1$ and the fact that $T$ is $B$-circular, we see
\[
E(b_0Y_1b_1Y_1b_2\cdots Y_1b_p)=0
\]
for all $p\in\Nats$ and $b_0,b_1,\ldots,b_p\in B$.
We have
\[
Z^{-n}=b^{-n}+\sum_{p=1}^n\sum_{\substack{q_0,\ldots,q_p\ge0\\q_0+\cdots+q_p=n-p}}b^{-q_0}Y_1b^{-q_1}\cdots Y_1b^{-q_p}.
\]
Let $Y_n=Z^{-n}-b^{-n}$ be the summation found above.
Then we have $E(bY_n)=0$ for every $b\in B$.
Thus, we have
\begin{multline*}
E((Z^{-n})^*Z^{-n})=(b^{-n})^*b^{-n}+E((b^{-n})^*Y_n)+E(Y_n^*b^{-n})+E(Y_n^*Y_n) \\
=(b^{-n})^*b^{-n}+E(Y_n^*Y_n)\ge(b^{-n})^*b^{-n}.
\end{multline*}
\end{proof}

Now we get the following 2-norm estimates for powers of a DT-operator $Z$.
For $0< r<s$, we let $A(r,s)$ be the closed annulus with radii $r$ and $s$.

\begin{cor}\label{cor:Znorms}
Let $Z$ be a DT-operator whose spectrum is contained in the closed annulus $A(r,s)$ and let $k\ge1$ be a natural number.
 Then
\[
\tau((Z^k)^*Z^k) \geq r^{2k},\qquad \tau((Z^{-k})^*Z^{-k})\geq s^{-2k}.
\]
\end{cor}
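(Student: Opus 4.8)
The plan is to realize $Z$ in the concrete model of Theorem~\ref{thm:d+t} and then apply Lemma~\ref{lem:Z2norm}. Since both $\tau((Z^k)^*Z^k)$ and $\tau((Z^{-k})^*Z^{-k})$ depend only on the $*$-moments of $Z$, hence only on the data $(\mu,c)$, I may work with any convenient $\DT(\mu,c)$-operator. So I would take $Z=\lambda(f)+cT$, where $\lambda\colon L^\infty([0,1])\to\Mcal$ is as in Theorem~\ref{thm:d+t}, $T=\UT(X,\lambda)$ is quasinilpotent, $f$ pushes Lebesgue measure forward to $\mu$, and $B=\lambda(L^\infty([0,1]))$ is a commutative unital C$^*$-algebra. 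Put $b=\lambda(f)\in B$. Then $cT$ is a $B$-valued circular element (the cumulants of $(cT,(cT)^*)$ are just $c^2$ times those of $(T,T^*)$, so only the balanced order-two ones survive) and $cT$ is quasinilpotent because $T$ is. Thus $Z=b+cT$ is exactly of the form treated in Lemma~\ref{lem:Z2norm}, provided $b$ is invertible.

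Next I would extract invertibility of $b$ together with pointwise size bounds from the annulus hypothesis. By Theorem~\ref{thm:dt-properties}(ii), $\sigma(Z)=\supp(\mu)$, and $\supp(\mu)$ is the essential range of $f$. Hence $\sigma(Z)\subseteq A(r,s)$ forces $r\le|f(x)|\le s$ for a.e.\ $x$. In particular $f$ is bounded away from $0$, so $b=\lambda(f)$ is invertible in $B$, and, viewing elements of $B$ as functions, we have the pointwise bounds $|b|^{2k}=|f|^{2k}\ge r^{2k}$ and $|b^{-1}|^{2k}=|f|^{-2k}\ge s^{-2k}$ for every $k\ge1$.

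With these in hand, Lemma~\ref{lem:Z2norm} gives, for the $\tau$-preserving conditional expectation $E$ onto $B$,
\[
E((Z^k)^*Z^k)\ge (b^k)^*b^k=|b|^{2k},\qquad E((Z^{-k})^*Z^{-k})\ge (b^{-k})^*b^{-k}=|b^{-1}|^{2k},
\]
which are inequalities between positive elements of $B$ (using commutativity of $B$ to rewrite $(b^k)^*b^k=(b^*b)^k=|b|^{2k}$). Finally I would push these down to scalars. Writing $\phi=\tau\restrict_B$, which is integration against Lebesgue measure, the expectation satisfies $\tau=\phi\circ E$; since $\phi$ is a positive functional it preserves the inequalities, so
\[
\tau((Z^k)^*Z^k)=\phi\big(E((Z^k)^*Z^k)\big)\ge\int_0^1|f(x)|^{2k}\,dx\ge r^{2k},
\]
and likewise $\tau((Z^{-k})^*Z^{-k})\ge\int_0^1|f(x)|^{-2k}\,dx\ge s^{-2k}$, as claimed.

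The substantive analytic content is already packaged in Lemma~\ref{lem:Z2norm}, so what remains is essentially bookkeeping: confirming that $cT$ inherits $B$-circularity and quasinilpotence, that the conditional expectation $E$ used here is faithful (automatic, since $\tau$ is faithful and $\tau=\phi\circ E$, so that Lemma~\ref{lem:Z2norm} applies verbatim), and that $\supp(\mu)\subseteq A(r,s)$ is genuinely equivalent to $r\le|f|\le s$ a.e. The step most worth double-checking is this last translation of the spectral hypothesis into pointwise bounds on $f$, since it is precisely what converts the operator inequalities of Lemma~\ref{lem:Z2norm} into the stated scalar bounds; everything else is routine.
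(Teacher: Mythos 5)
Your proposal is correct and takes essentially the same route as the paper: write $Z=b+cT$ in the standard model with $b=\lambda(f)$ normal and $cT$ quasinilpotent $B$-circular, apply Lemma~\ref{lem:Z2norm}, and push the operator inequalities down by the $\tau$-preserving conditional expectation. You merely make explicit some steps the paper leaves implicit (the reduction to the model via $*$-moments, and the translation of $\sigma(Z)\subseteq A(r,s)$ into $r\le|f|\le s$ a.e.), all of which are handled correctly.
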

\begin{proof}
We have $Z=b+cT$ where $T$ is the quasinilpotent DT-operator and $b$ is a normal operator whose spectrum lies in the closed annulus $A(r,s)$.
Since the conditional expectation $E$ is $\tau$--preserving, from Lemma~\ref{lem:Z2norm} we have
\begin{align*}
\tau((Z^{k})^*Z^k)&=\tau\circ E(((Z^{k})^*Z^k) \geq \tau((b^k)^*b^k)\ge r^{2k},\\
\tau((Z^{-k})^*Z^{-k})&=\tau\circ E((Z^{-k})^*Z^{-k})\geq\tau((b^{-k})^*b^{-k})\geq s^{-2k}.
\end{align*}
\end{proof}

\section{Some non-spectral DT-operators} 
\label{sec:nonspec}

In \cite{DKU19}, we showed that all operators in a class which includes Voiculescu's circular operator fail to have the UNZA property, and hence are examples of non-spectral operators. 
We can now extend this result to a broader class.
The idea is to use the upper-triangular model for DT-operators from Theorem~\ref{thm:dt-ut} and the trace estimates in Corollary~\ref{cor:Znorms},
along with the explicit form of Haagerup-Schultz projections
for annuli from Proposition~\ref{prop:hsproj-disc}, to construct arbitrarily close vectors belonging to complementary Haagerup-Schultz projections. 

In order to estimate the minimal angle of an operator, it is useful to cut it down by a projection. Restricting an operator to a Haagerup-Schultz projection can only increase the minimum angle between its subspaces.
\begin{lemma}\label{lem:sub-angle}
For an operator $T \in \Mcal$, and a Borel set $B\subset \Cpx$, let $p=P(T,B)$, and consider $Tp=pTp\in p\Mcal p$. Then, for a Borel set $C \subseteq \Cpx$,  
\[ 
\alpha_{p\Mcal p}(P(Tp,C),P(Tp,C^c)) \geq \alpha(P(T,C),P(T,C^c)), 
\]
so
\[ 
\alpha_{p\Mcal p}(Tp)\geq \alpha(T).
\] 
In particular, if $Tp$ fails to have the UNZA property, then so does $T$.
\begin{proof}
This follows from Theorem~\ref{prop:hsproj-sim} and Proposition~\ref{prop:hsproj-lattice}.
We have
\[ 
P_{p \Mcal p} (Tp, C) = P(T,C) \wedge P(T,B) = P(T,B\cap C) \leq P(T,C)
\] 
and 
\[ 
P_{p \Mcal p} (Tp, C^c) = P(T,C^c) \wedge P(T,B) = P(T,B\cap C^c) \leq P(T, C^c )
\] 
so the angles obey the inequality above.
\end{proof}
\end{lemma}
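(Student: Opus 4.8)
The plan is to reduce the statement to a purely geometric fact about subspaces: I will show that the two complementary Haagerup--Schultz projections of the compression $Tp$, computed in $p\Mcal p$, have ranges contained respectively in the ranges of $P(T,C)$ and $P(T,C^c)$ inside $\Mcal$, and then invoke the elementary principle that shrinking two closed subspaces can only increase the minimal angle between them. The termwise inequality will follow immediately, and the second displayed inequality of the lemma will come from passing to the infimum.

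For the containment, I first record that $p=P(T,B)$ is a $T$-invariant projection by Theorem~\ref{thm:hs-proj}(ii), so Theorem~\ref{prop:hsproj-sim}(i) applies with $Q=p$ and identifies the Haagerup--Schultz projection of the compression as a meet,
\[
P(Tp,C)=P(T,C)\wedge p=P(T,C)\wedge P(T,B),
\]
where $P(Tp,C)$ denotes the projection computed in $p\Mcal p$. The lattice identity of Proposition~\ref{prop:hsproj-lattice} converts this meet into $P(T,B\cap C)$, and the same proposition, applied to $C=(B\cap C)\cup(B^c\cap C)$, gives the monotonicity $P(T,B\cap C)\le P(T,C)$. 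Running the identical argument with $C^c$ in place of $C$ yields $P(Tp,C^c)=P(T,B\cap C^c)\le P(T,C^c)$, so the two ranges in $p\HEu$ are contained, respectively, in $P(T,C)\HEu$ and $P(T,C^c)\HEu$.

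I would then apply the monotonicity of the angle functional: if $V'\subseteq V$ and $W'\subseteq W$ are nonzero closed subspaces, the infimum defining $\alpha(V',W')$ is taken over a subcollection of vector pairs, so $\alpha(V',W')\ge\alpha(V,W)$; this delivers the first displayed inequality. Here I would also note, citing the fact that the cosine of the angle is the top of the spectrum of the product of the two projections, that the angle is representation-independent, so computing it in $p\Mcal p$ on $p\HEu$ agrees with computing it in $\Mcal$ on $\HEu$. Finally, taking the infimum over Borel sets $C$ with $0<\mu_{Tp}(C)<1$ (Brown measure in $p\Mcal p$) transfers the inequality to $\alpha_{p\Mcal p}(Tp)\ge\alpha(T)$. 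The one point requiring care, which I expect to be the only delicate step rather than any single inequality, is the bookkeeping between the two infima: I must check that every such $C$ yields a genuine term $\alpha(P(T,C),P(T,C^c))$ in the infimum defining $\alpha(T)$, i.e.\ that $0<\mu_T(C)<1$. This holds because non-vanishing of $P(Tp,C)=P(T,B\cap C)$ forces $\mu_T(C)>0$, and symmetrically for $C^c$, after which the final ``in particular'' assertion about the UNZA property is immediate.
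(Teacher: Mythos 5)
Your proposal is correct and follows essentially the same route as the paper's proof: identify $P(Tp,C)=P(T,C)\wedge P(T,B)=P(T,B\cap C)\le P(T,C)$ via Theorem~\ref{prop:hsproj-sim}(i) and Proposition~\ref{prop:hsproj-lattice}, do the same for $C^c$, and conclude by monotonicity of the angle under shrinking subspaces. Your additional remarks --- representation-independence of the angle (already noted in Section~\ref{sec:angles}) and the check that $0<\mu_{Tp}(C)<1$ forces $0<\mu_T(C)<1$ --- are correct refinements of details the paper leaves implicit.
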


We are now ready to estimate the angles between some Haagerup-Schultz subspaces of DT-operators with suitable Brown measures.
We will frequently use the notation $A(r,s)$ for the closed annulus with radii $r<s$, centered at the origin.
When $r=0$, this equals the closed disk of radius $s$.

\begin{lemma}\label{lem:dtangle}
Let $0\leq r<r'<s'<s$ and $c>0$.
Let $Z$ be a $\DT(\mu,c)$-operator whose measure $\mu$ is radially symmetric and concentrated in the union of the annuli $A(r,r')$ and $A(s',s)$.
Let $t=\mu(A(r,r'))$, and assume $0<t<1$. 
Then 
\begin{equation}
\cos(\alpha(Z)) 
\geq \left(1+\frac{s^2-r^2}{c^2\max(t,1-t)} \right)^{-1/2}
\geq \left(1+\frac{2 (s^2-r^2)}{c^2} \right)^{-1/2}_. 
\label{eq:aZlbd}
\end{equation}
\end{lemma}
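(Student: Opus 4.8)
\emph{Plan.} The plan is to apply the upper-triangular model of Theorem~\ref{thm:dt-ut} with $n=2$, to fix one convenient Borel set $B_0$, and to produce for it an explicit pair of nearly parallel vectors, one in each of the two complementary Haagerup--Schultz subspaces. Since $\alpha(Z)$ is an \emph{infimum} over Borel sets, a single good $B_0$ already gives a lower bound for $\cos(\alpha(Z))$; as everything in sight depends only on the $*$-distribution of $Z$, I may assume $Z$ is the specific model produced by Theorem~\ref{thm:dt-ut}. Radial symmetry lets me write $\mu=t\mu_1+(1-t)\mu_2$ with $\mu_1=t^{-1}\mu\restrict_{A(r,r')}$ concentrated in $A(r,r')$ and $\mu_2=(1-t)^{-1}\mu\restrict_{A(s',s)}$ concentrated in $A(s',s)$, both radially symmetric. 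Fixing $\rho\in(r',s')$ and $B_0=\overline{\rho\Dbb}$, the model gives $Z=\left(\begin{smallmatrix}a_1&b_{12}\\0&a_2\end{smallmatrix}\right)$ with $a_1$ a $\DT(\mu_1,c\sqrt t)$-operator, $a_2$ a $\DT(\mu_2,c\sqrt{1-t})$-operator, $b_{12}=cp_1Xp_2$, and, by the last clause of Theorem~\ref{thm:dt-ut}, $p_1=P(Z,B_0)$, while $\mu(B_0)=t\in(0,1)$.

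Next I identify the complementary subspace $W:=P(Z,B_0^c)\HEu$ as a graph. Since $\sigma(a_1)\subseteq A(r,r')$ and $\sigma(a_2)\subseteq A(s',s)$ are disjoint (Theorem~\ref{thm:dt-properties}(ii)), the Sylvester equation $a_1S-Sa_2=-b_{12}$ has a unique bounded solution $S=\sum_{k\ge0}a_1^kb_{12}a_2^{-(k+1)}\in p_1\Mcal p_2$, the series converging in norm because $\|a_1^kb_{12}a_2^{-(k+1)}\|^{1/k}\to r'/s'<1$. Then $G=1+S$ is invertible with $G^{-1}=1-S$, and a direct computation shows $G^{-1}ZG=\left(\begin{smallmatrix}a_1&0\\0&a_2\end{smallmatrix}\right)$. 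For this block-diagonal operator the Haagerup--Schultz projection of $B_0^c$ is $p_2$ (its $a_1$-block contributes nothing since $\mu_1(B_0^c)=0$, and its $a_2$-block all of $p_2$ since $\mu_2(B_0^c)=1$), so Theorem~\ref{prop:hsproj-sim}(iii) gives $W=\overline{Gp_2\HEu}=\{S\xi+\xi:\xi\in p_2\HEu\}$, while $P(Z,B_0)\HEu=\overline{Gp_1\HEu}=p_1\HEu=:V$.

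Now for the angle estimate. Given $\xi\in p_2\HEu$, set $w=S\xi+\xi\in W$ and $v=p_1w=S\xi\in V$; then $\langle v,w\rangle=\|S\xi\|^2$, $\|v\|=\|S\xi\|$ and $\|w\|^2=\|S\xi\|^2+\|\xi\|^2$. Since $\alpha(Z)\le\alpha(V,W)\le\mathrm{angle}(v,w)$ and cosine is decreasing, taking the supremum over $\xi$ yields
\[
\cos(\alpha(Z))\ge\sup_{\xi}\frac{\|S\xi\|}{\sqrt{\|S\xi\|^2+\|\xi\|^2}}=\frac{\|S\|}{\sqrt{1+\|S\|^2}}=\bigl(1+\|S\|^{-2}\bigr)^{-1/2}.
\]
Thus the whole lemma reduces to the lower bound $\|S\|^2\ge c^2\max(t,1-t)/(s^2-r^2)$.

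This last bound is the crux. Expanding $S^*S$ and using $b_{12}=cp_1Xp_2$ together with the freeness of $X$ from $\Nc$ (which contains $a_1,a_2$), the two-$X$ moment identity $\tau(X\gamma X\delta)=\tau(\gamma)\tau(\delta)$ collapses $\tau(S^*S)$ to $c^2\sum_{k,l\ge0}\tau((a_1^k)^*a_1^l)\,\tau(a_2^{-(l+1)}(a_2^{-(k+1)})^*)$. Because $\mu_1$ is radially symmetric, $a_1$ is rotationally invariant in $*$-distribution (multiplying the $\mu_1$-distributed diagonal and the complex-Gaussian strictly-upper-triangular part of the random-matrix model by $e^{i\theta}$ leaves their joint law unchanged), so $\tau((a_1^k)^*a_1^l)=0$ for $k\ne l$ and only the diagonal survives. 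Corollary~\ref{cor:Znorms}, applied in the two corners with their renormalized traces, gives $\tau((a_1^k)^*a_1^k)\ge t\,r^{2k}$ and $\tau(a_2^{-(k+1)}(a_2^{-(k+1)})^*)\ge(1-t)\,s^{-2(k+1)}$, so the surviving geometric series sums to $\tau(S^*S)\ge c^2t(1-t)/(s^2-r^2)$. Finally, since $S^*S$ and $SS^*$ are supported under $p_2$ and $p_1$ respectively, $\|S\|^2=\|S^*S\|=\|SS^*\|\ge\tau(S^*S)/\min(t,1-t)$, and $t(1-t)/\min(t,1-t)=\max(t,1-t)$ delivers the required bound; the second inequality in~\eqref{eq:aZlbd} then follows from $\max(t,1-t)\ge\tfrac12$. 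The obstacles I anticipate are justifying the graph identification of $P(Z,B_0^c)$ (handled by the explicit block-diagonalizing similarity and Theorem~\ref{prop:hsproj-sim}) and, above all, the vanishing of the off-diagonal terms of $\tau(S^*S)$, which rests squarely on the rotational invariance forced by radial symmetry.
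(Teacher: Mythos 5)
Your proof is correct, and through its core it is the paper's own argument: the $n=2$ upper-triangular model of Theorem~\ref{thm:dt-ut}, the norm-convergent series solving the Sylvester equation (your $S$ is exactly the paper's intertwiner $Y$), the identification of the two Haagerup--Schultz subspaces as $p_1\HEu$ and the graph of $S$ via Theorem~\ref{prop:hsproj-sim}, and the trace computation in which freeness of $X$ collapses $\tau(S^*S)$, radial symmetry kills the off-diagonal terms $\tau((a_1^k)^*a_1^l)$, $k\ne l$, and Corollary~\ref{cor:Znorms} bounds the surviving geometric series below by $c^2t(1-t)/(s^2-r^2)$ --- indeed $\tau(S^*S)=\|Y\,\widehat{1-p}\|_2^2$, so even your trace quantity coincides with the paper's. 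Where you genuinely diverge is the endgame. The paper tests the angle on the single vector $\widehat{1-p}$, which yields only $\cos(\alpha(Z))\ge\left(1+(s^2-r^2)/(c^2t)\right)^{-1/2}$, and must then build a \emph{second} triangular model with the blocks swapped ($Z_2$ on top) to obtain the companion bound with $1-t$, taking the better of the two. You instead optimize over all $\xi\in p_2\HEu$, reducing the lemma to a lower bound on the operator norm $\|S\|$, and pass from trace to norm by the support observation $\tau(S^*S)\le\|S\|^2\min(t,1-t)$ (valid since $S^*S\le\|S\|^2p_2$, $SS^*\le\|S\|^2p_1$, and $\tau(S^*S)=\tau(SS^*)$); this gives $\|S\|^2\ge c^2\max(t,1-t)/(s^2-r^2)$ and hence the full estimate from a single model. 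That is a modest but genuine streamlining: it halves the construction and makes $\max(t,1-t)$ appear for a structural reason rather than by running the argument twice. Two small points would make it airtight: the reduction ``I may assume $Z$ is the model'' should note that Haagerup--Schultz projections lie in $W^*(Z)$ and that $\alpha(P,Q)$ is representation-independent (as observed after the definition of angle in Section~\ref{sec:angles}), so $\alpha(Z)$ is a $*$-distribution invariant; and when $r=0$, Corollary~\ref{cor:Znorms} as stated assumes $0<r$, but the bound you need for $a_1$ is then the trivial $\tau((a_1^k)^*a_1^k)\ge 0$ and only $a_2$ (whose spectrum avoids $0$ since $s'>0$) needs invertibility --- an edge case the paper glosses in the same way.
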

\begin{proof}
Let $\mu_1$ and $\mu_2$ denote the renormalized restrictions of $\mu$ to $A(r,r')$ and $A(s',s)$ respectively. 
From the upper-triangular model found in Theorem \ref{thm:dt-ut}, there exists
an example of a $\DT(\mu,c)$ operator
\[ Z = \left( \begin{matrix} Z_1 &  c p X (1-p) \\ 0 & Z_2 \end{matrix} \right), \]
where
$p \in \Mcal$ is a projection with $\tau(p)=t$,  $Z_1 \in p\Mcal p$ is a $\DT(\mu_1,c\sqrt t)$-operator, $Z_2 \in (1-p)\Mcal(1-p)$ is a $\DT(\mu_2,c\sqrt{1-t})$-operator and $X$ is a semicircular operator with $\tau(X^2)=1$
and so that $X$ and $\{Z_1,Z_2,p\}$ are $*$-free.

Regard $Z$ as an operator acting on the Hilbert space $\HEu=L^2(\Mcal) =L^2(\Mcal,\tau)$.
For $x \in \Mcal$, let $\hat{x}$ denote the corresponding vector in $L^2(\Mcal)$. 

Since DT operators have spectra equal to the supports of their Brown measures, $Z_2$ is invertible in $(1-p)\Mcal (1-p)$.
Let $\eps>0$. From the spectral radius formula, we have, for $k$ large enough,
\[ \| Z_1 ^k \| \leq (1+\eps) (r')^k , \qquad \|Z_2^{-k}\| \leq (1+\eps) (s')^{-k}_. \]
Since $r'<s'$, it follows that the series
\[
 \sum_{k=0}^n Z_1^k \left(c p X (1-p) \right) Z_2^{-k-1} 
\]
converges in operator norm to $Y \in \Mcal$. Since $pZ_1=Z_1$ and $Z_2(1-p)=Z_2$, a direct computation shows 
\[ S = \begin{pmatrix} p & Y \\ 0 & 1-p \end{pmatrix} \]
is invertible in $\Mcal$, with inverse
\[ S^{-1}=\begin{pmatrix} p & -Y \\ 0 & 1-p \end{pmatrix}_. \]
Moreover,
\begin{equation}\label{eq:SZSinv}
S \begin{pmatrix} Z_1 & 0 \\  0 & Z_2 \end{pmatrix} S^{-1} = \begin{pmatrix} Z_1 & cpX(1-p) \\ 0 & Z_2 \end{pmatrix}_.
\end{equation}

Since $\sigma(Z_1)\subseteq A(r,r')$ and $\sigma(Z_2)\subseteq A(s',s)$ (where the spectra are computed in the corners $p\Mcal p$ and $(1-p)\Mcal (1-p)$ respectively), we have 
\begin{gather*} 
P\left( \begin{pmatrix} Z_1 & 0 \\  0 & Z_2 \end{pmatrix}, A(r,r') \right) = p\HEu, \\
P\left( \begin{pmatrix} Z_1 & 0 \\  0 & Z_2 \end{pmatrix}, A(s',s) \right) = (1-p)\HEu.
\end{gather*}
Using equation \eqref{eq:SZSinv} and Proposition \ref{prop:hsproj-sim}, we get
\begin{equation}\label{eq:Arr'}
P(Z, A(r,r')) \HEu = S P\left( \begin{pmatrix} Z_1 & 0 \\  0 & Z_2 \end{pmatrix}, A(r,r') \right) \HEu = S (p\HEu) = p\HEu, 
\end{equation}
and
\begin{multline}
P(Z,A(s',s))\HEu = S P\left( \begin{pmatrix} Z_1 & 0 \\  0 & Z_2 \end{pmatrix}, A(s',s) \right) \HEu  = S(1-p)\HEu \\ = \left \{ \begin{pmatrix} Y \eta  \\ \eta \end{pmatrix}: \eta \in (1-p)\Mcal \right \}_.  \label{eq:As's}
\end{multline}
Let $\eta = \widehat{1-p}$. Then 
\[ Y\eta =  \sum_{k=0}^\infty Z_1^k cpX(1-p) Z_2^{-k-1} \widehat{1-p}_. \]
From the radial symmetry of $\mu_1$, it follows that $Z_1$ and $\lambda Z_1$ have the same $*$-moments for every complex number $\lambda$ of modulus $1$.
Thus, $\tau((Z_1^j)^*Z_1^k)=0$ whenever $k$ and $j$ are nonnegative integers with $j\ne k$.
Now using $*$-freeness of $X$ and $\{Z_1,Z_2,p\}$, we calculate, for $0\le m\le n$,
\begin{align}
&\left\| \sum_{k=0}^n Z_1^kcpX(1-p)Z_2^{-k-1}\widehat{(1-p)}\right\|_2^2 \notag \\ 
&=c^2 \sum_{0\le k_1,k_2\le n}\tau(\big(1-p)(Z_2^{-k_1-1})^*(1-p)Xp(Z_1^{k_1})^*Z_1^{k_2}pX(1-p)Z_2^{-k_2-1}\big) \notag \\
&=c^2 \sum_{0\le k_1,k_2\le n}\tau(1-p)\tau_{(1-p)}((Z_2^{-k_1-1})^*Z_2^{-k_2-1})\tau(X^2)\tau(p(Z_1^{k_1})^*Z_1^{k_2}p) \notag \\
&=c^2 \tau(p)\tau(1-p)\sum_{k=0}^n\tau_{(1-p)}((Z_2^{-k-1})^*Z_2^{-k-1})\tau_p((Z_1^k)^*Z_1^k). \label{eq:sumnorm}
\end{align}
From Corollary~\ref{cor:Znorms}
we have, for all $k\ge0$,
\begin{gather*}
\tau_{(p)} \left(  Z_1^{*k} Z_1^k \right)  \geq  r^{2k}, \\
\tau_{(1-p)} \left(  (Z_2^{*})^{-k-1} Z_2^{-k-1} \right) \geq s^{-2k-2}.
\end{gather*}
So using~\eqref{eq:sumnorm} we get
\begin{equation}
\| Y\eta \|_2^2 \geq c^2 t (1-t) \sum_{k=0}^\infty \frac{1}{s^2} \left(\frac{r}{s}\right)^{2k} = \frac{c^2 t(1-t)}{s^2 -r^2}. \label{eq:xiunder}
\end{equation}
From \eqref{eq:Arr'} and \eqref{eq:As's}, we have 
\[ \left( \begin{matrix}Y \eta \\ \eta \end{matrix} \right) \in P(Z,A(s',s)), \qquad \left( \begin{matrix} Y \eta \\ 0 \end{matrix} \right) \in P(Z,A(r,r')). \]
Computing the cosine of the angle between $ \left( \begin{matrix}Y \eta \\ \eta \end{matrix} \right)$ and $ \left( \begin{matrix} Y \eta \\ 0 \end{matrix} \right)$ gives 
\begin{multline*}
\cos \left( \alpha(P(Z,A(r,r')),P(Z,A(s',s)) \right) \geq \frac{ \| Y \eta \| }{((1-t)+\| Y \eta \|^2)^{1/2}} \\
=\left(1+\frac{1-t}{\|Y \eta\|^2}\right)^{-1/2}.
\end{multline*}
Now using the lower bound~\eqref{eq:xiunder}, we get
\begin{equation}\label{eq:cosalphat}
\cos(\alpha(Z)) \geq \left(1+\frac{s^2-r^2}{c^2 t}\right)^{-1/2}_.
\end{equation}

In a similar fashion we find also an example of a $\DT(\mu,c)$ operator 
\[ Z = \left( \begin{matrix} Z_2 & c (1-p) X p \\ 0 & Z_1 \end{matrix} \right), \]
where
$p \in \Mcal$ is a projection with $\tau(1-p)=1-t$,  $Z_1 \in p\Mcal p$ is a $\DT(\mu_1,c \sqrt t)$-operator, $Z_2 \in (1-p)\Mcal(1-p)$ is a $\DT(\mu_2,c \sqrt{1-t})$-operator and $X$ is a semicircular operator with $\tau(X^2)=1$
and so that $X$ and $\{Z_1,Z_2,p\}$ are $*$-free.
With $Y= -\sum_{k=0}^\infty Z_2^{-k-1} (1-p)Xp Z_1^k$, and
\[
S = \begin{pmatrix} (1-p) & Y \\ 0 &  p \end{pmatrix}_,
\]
we have 
\[ 
S \begin{pmatrix} Z_2 & 0 \\ 0 & Z_1 \end{pmatrix} S^{-1} = Z
\]
so by similar reasoning, we get
\[ \left( \begin{matrix}Y \widehat{p} \\ \widehat{p} \end{matrix} \right) \in P(Z,A(r,r')), \qquad \left( \begin{matrix} Y \widehat{p} \\ 0 \end{matrix} \right) \in P(Z,A(s',s)). \]
Then this gives us the estimate
\begin{equation}\label{eq:cosalpha1-t}
\cos(\alpha(Z)) \geq \left(1+\frac{s^2-r^2}{c^2 (1-t)}\right)^{-1/2}_.
\end{equation}
Combining equations \eqref{eq:cosalphat} and \eqref{eq:cosalpha1-t} gives us the required bound.
\end{proof}

This lemma allows us to show that many $\DT$-operators are not spectral.
For ease of notation, when $r\le0$ and $s>0$, $A(r,s)$ will denote the closed disc $A(0,s)$, and $A(s,s)$ will denote the circle of radius $s$.

\begin{thm}\label{thm:unza_measures}
Let $c>0$, and let $\mu$ be a radially symmetric, compactly supported Borel probability measure on $\Cpx$, such that there exists $x_0\geq 0$ with
\[  
\lim_{\delta \to 0^+}  \frac{ \mu(A(x_0-\delta,x_0+\delta)\setminus A(x_0,x_0))}{\delta} = \infty.
\]
Let $Z$ be a $\DT(\mu,c)$ operator. Then $Z$ fails to satisfy the UNZA property, and hence is not spectral.
\end{thm}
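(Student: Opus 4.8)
\emph{The plan} is to combine the two-annulus angle estimate of Lemma~\ref{lem:dtangle} with the compression principle of Lemma~\ref{lem:sub-angle}, using the fact (Theorem~\ref{thm:dt-properties}(iii)) that restricting a $\DT$-operator to the Haagerup--Schultz projection of a union of two annuli again yields a $\DT$-operator whose measure is concentrated on those annuli. Since a spectral operator must have the UNZA property (by the corollary following Theorem~\ref{thm:angle-sotqn}), it suffices to exhibit a sequence of Haagerup--Schultz compressions of $Z$ whose minimal angle tends to $0$, forcing $\alpha(Z)=0$.

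First I would pass to the radial picture. Let $\nu$ be the push-forward of $\mu$ under $z\mapsto|z|$ and set $m_-(\delta)=\nu([\max(0,x_0-\delta),x_0))$ and $m_+(\delta)=\nu((x_0,x_0+\delta])$, so the hypothesis reads $(m_-(\delta)+m_+(\delta))/\delta\to\infty$. Then $\max(m_-(\delta),m_+(\delta))/\delta\to\infty$, and by monotonicity of $m_\pm$ there is a side $S\in\{-,+\}$ and a sequence $\delta_n\to0^+$ with $m_S(\delta_n)/\delta_n\to\infty$ and $m_S(\delta)>0$ for every $\delta>0$; say $S=+$, the inner case being symmetric. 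In particular the mass of $\nu$ accumulates at $x_0$ from the outer side, so $\nu\restrict_{(x_0,x_0+\delta_n]}$ is not a single atom and has support accumulating at $x_0$.

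For each $n$ I would split the window into two annuli separated by a gap. Pick a non-atom radius $w\in(x_0,x_0+\delta_n)$ with $\nu((x_0,w))>0$ and $\nu((w,x_0+\delta_n])>0$ (possible since the support is infinite and has at most countably many atoms), and a small $\theta>0$, and set $r_n=x_0$, $r_n'=w-\theta$, $s_n'=w+\theta$, $s_n=x_0+\delta_n$, and $C_n=A(r_n,r_n')\cup A(s_n',s_n)$. For $\theta$ small both annuli carry positive $\mu$-mass, the excluded gap $(w-\theta,w+\theta)$ has measure tending to $\nu(\{w\})=0$ so that $\mu(C_n)\ge\tfrac12 m_+(\delta_n)$, and $s_n^2-r_n^2=2x_0\delta_n+\delta_n^2=O(\delta_n)$. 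Letting $p_n=P(Z,C_n)$, Theorem~\ref{thm:dt-properties}(iii) shows $Zp_n$ is a $\DT$-operator in $p_n\Mcal p_n$ whose radially symmetric measure is concentrated in $A(r_n,r_n')\cup A(s_n',s_n)$, with parameter $c\sqrt{\mu(C_n)}$. Applying the second inequality of Lemma~\ref{lem:dtangle},
\[
\cos\big(\alpha_{p_n\Mcal p_n}(Zp_n)\big)\ge\Big(1+\frac{2(s_n^2-r_n^2)}{c^2\,\mu(C_n)}\Big)^{-1/2}.
\]
Since $\dfrac{s_n^2-r_n^2}{\mu(C_n)}\le 2(2x_0+\delta_n)\,\dfrac{\delta_n}{m_+(\delta_n)}\to0$, the right-hand side tends to $1$, so $\alpha_{p_n\Mcal p_n}(Zp_n)\to0$; by Lemma~\ref{lem:sub-angle} we have $\alpha(Z)\le\alpha_{p_n\Mcal p_n}(Zp_n)$ for every $n$, whence $\alpha(Z)=0$ and $Z$ is not spectral.

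The main obstacle is the measure-theoretic splitting in the third step: one must check that the blow-up hypothesis forces mass to accumulate at $x_0$ from at least one side, so that a genuine annular gap of vanishing measure can be inserted while both remaining pieces keep positive measure and the total retained mass stays large compared with the window width $\delta_n$. Once this is arranged, the balance $s_n^2-r_n^2=O(\delta_n)$ against $\mu(C_n)\gg\delta_n$ is exactly what drives the estimate to $1$; everything else is bookkeeping around the previously established results.
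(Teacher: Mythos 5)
Your proof is correct and takes essentially the same route as the paper: compress $Z$ to the Haagerup--Schultz projection of a two-annuli set using Theorem~\ref{thm:dt-properties}(iii), then combine Lemma~\ref{lem:dtangle} with Lemma~\ref{lem:sub-angle} and let the ratio of retained mass to window width drive $\cos(\alpha(Z))$ to $1$. The only divergence is in the measure-theoretic bookkeeping: you work in a one-sided window $(x_0,x_0+\delta_n]$ (or its inner mirror) after noting the blow-up must occur from at least one side, whereas the paper uses the symmetric window $A(x_0-\eps,x_0+\eps)$ and obtains the $\mu$-null splitting radius $x_\eps$ by observing that the two extremal radii cannot both equal $x_0$ --- both devices correctly insert a null gap while keeping $\mu(C_n)$ much larger than $s_n^2-r_n^2$, so this is a variant of detail, not of method.
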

\begin{proof}
Let $N \in \mathbb{N}$. Choose $\eps>0$ such that for all $\delta\in(0,\eps]$,
\begin{equation}\label{eq:muA} \frac{\mu(A(x_0-\delta,x_0+\delta)\setminus A(x_0,x_0))}{\delta } > N. \end{equation}
Let $r =\max\{0,x_0-\eps\}$ and $s=x_0+\eps$. 
Since $\mu(A(x_0-\delta,x_0+\delta))>0$ for all $\delta\leq \eps$, we have 
\[ 
\sup\{ x \in (r,s): \mu(A(r,x))=0 \} \leq x_0
\]
and
\[
\inf\{ x \in (r,s): \mu(A(x,s))=0 \} \ge x_0. \]
Further, these two quantities cannot both be equal to $x_0$, as this would violate equation \eqref{eq:muA}. Using this fact, and the fact that the set of circles centered at $0$ with positive $\mu$-measure is countable, we may choose $x_\eps \in (r,s)$ such that
$\mu(A(r,x_\eps))\neq 0$, $\mu(A(x_\eps,s))\neq 0$, and $\mu(A(x_\eps,x_\eps))=0$. 
From~\eqref{eq:muA} with $\delta=\eps$, we have $\mu(A(r,s)\setminus A(x_0,x_0))>\eps N$.
Since $\mu(A(x_\eps,x_\eps))=0$, we may now choose $r',s'$, with $r<r'<x_\eps<s'<s$, such that 
\[ \mu(A(r,r'))>0 ,\quad \mu(A(s',s))>0, \]
and
\begin{equation}\label{eq:muB}
\mu(A(r,r')\cup A(s',s)) > \eps N.
\end{equation}
Let $B$ be the set $A(r,r')\cup A(s',s)$, $q=P(Z,B)$ and let $\tilde{Z}$ be the restriction $Zq$. Let $\tilde{\mu}$ be the renormalized restriction of $\mu$ to $B$.
From Theorem \ref{thm:dt-properties}, $Zq$ is a $\DT(\tilde{\mu},c\sqrt{\mu(B)})$ operator.
Applying Lemmas~\ref{lem:dtangle} and~\ref{lem:sub-angle} now gives us the estimate
\[ \cos(\alpha(Z)) \ge \cos(\alpha(\tilde{Z})) \ge \left(1 + \frac{2(s^2-r^2)}{c^2 \mu(B)} \right)^{-1/2}_.\]
Since $x_0$ is in the support of $\mu$ we have $x_0 \leq \norm{Z}$.
Consequently, when $x_0-\eps>0$, we get $ s^2 - r^2 = 4 x_0 \eps \leq 4 \eps \norm{Z}$.
On the other hand, if $x_0-\eps\le 0$, then
\[ s^2 - r^2 = (x_0+\eps)^2 -0 \le 4 \eps^2 \le 4 \eps \norm{Z}, 
\]
since we may safely assume that $\eps<\norm{Z}$.
Hence, using~\eqref{eq:muB}, we get
\[ \frac{2(s^2-r^2)}{c^2 \mu(B)} \le \frac{8 \norm{Z}}{c^2} \frac{1}{N}_. \]
Letting $N$ be arbitrarily large shows that $\cos(\alpha(Z))$ is arbitrarily close to $1$, which implies $\alpha(Z)=0$, and $Z$ fails to have the UNZA property.

\end{proof}

There are many examples of measures which satisfy the concentration hypothesis of Theorem \ref{thm:unza_measures}.

\begin{example}
Let $a \geq 0$. Let $\mu$ be a  radially symmetric Borel probability measure supported on the union of the circles $C_n$ of radius $a+1/n$ ($n \in \Nats$), with $\mu(C_n)=w_n$. If 
\[ \lim_{n \to \infty} n \sum_{k=n}^\infty w_k = \infty,\]
then $\mu$ satisfies the hypothesis of Theorem \ref{thm:unza_measures}, so if $Z$ is a $\DT(\mu,1)$ operator, $Z$ is non-spectral. This happens, for instance, when $w_n$ is asymptotically proportional to $n^{-b}$, for some $1<b<2$.
\end{example}

\begin{example}
Let $f$ be a non-zero, compactly supported, Lebesgue integrable function on $[0,\infty)$. Assume that there is a point $x_0 \in [0,\infty)$ such that  
\[ \lim_{\eps \to 0} \frac{1}{\eps}  \int_{x_0-\eps}^{x_0+\eps} |f(x)|\, dx = \infty.
\]
Let $\mu$ be a radially symmetric measure on $\Cpx$ defined by 
\[ \mu(A(r,s)) = \frac{1}{\norm{f}_1} \int_r^s |f(x)|\, dx.\]
Then $\mu$ satisfies the hypothesis of Theorem~\ref{thm:unza_measures}. For instance, this happens with $x_0=0$
when $f(x)=x^{-a}$ near $0$, for some $0<a<1$.
\end{example}

\begin{rem}\label{rmk:cFP}
It is interesting that the hypothesis of Theorem~\ref{thm:unza_measures} does not hold for the circular free Poisson operators,
whose Brown measures are uniform (Lebesgue) measures on their supports, which are disks or annuli centered at the origin.
The proof, found in~\cite{DKU19}, that the circular free Poisson operators fail to satisfy the UNZA property depended on quite precise
knowledge about certain $*$-moments of the circular free Poisson operators and their inverses.
Comparing the result for circular free Poisson operators with Theorem~\ref{thm:unza_measures}, we have
the coincidence of failure of the UNZA property for DT-operators whose Brown measures have quite diverse behaviors, by somewhat different methods of proof.
This suggests that failure of the UNZA property might occur for many more DT-operators than those treated here or in~\cite{DKU19}.
\end{rem}

\end{document}